\tikzset{
v/.style={draw, fill, circle, minimum size=1.5mm, inner sep=0},
b/.style={draw , circle, minimum size=2.5mm, inner sep=.5mm},
e/.style={very thick},
vs/.style={draw, fill, circle, minimum size=1mm, inner sep=0},
bs/.style={draw, fill, circle, minimum size=1.5mm, inner sep=0mm},
es/.style={thick}
}
\newlength{\nodeheight}
\newlength{\nodewidth}
\numberwithin{equation}{section}
\declaretheorem[name=Theorem, sibling=equation]{theorem}
\declaretheorem[name=Lemma, sibling=equation]{lemma}
\declaretheorem[name=Proposition, sibling=equation]{proposition}
\declaretheorem[name=Corollary, sibling=equation]{corollary}
\declaretheorem[name=Definition, style=definition, sibling=equation]{definition}
\DeclareMathOperator{\Tor}{Tor}
\newcommand{\M}{{\mathcal{M}}}
\newcommand{\R}{{\mathscr{R}}}
\newcommand{\RBr}{\R{\rm Br}}
\newcommand{\Br}{{\rm Br}}
\newcommand{\fS}{\mathfrak{S}}
\renewcommand{\t}{\mathbbm{1}}
\newcommand{\calA}{\mathcal{A}}
\newcommand{\calB}{\mathcal{B}}
\newcommand{\calM}{\mathcal{M}}
\newcommand{\bY}{\mathbb{Y}}
\title{Homology of Rook-Brauer Algebras and Motzkin Algebras}
\author{Khoa Ta, \orcidlink{0009-0006-8646-0447}}
\address{CSU Fullerton, 800 N. State College Blvd. Fullerton CA 92831}
\email{khoa.ta@fullerton.edu}
\begin{document}

\begin{abstract}
Using the technique of inductive resolution introduced in \cite{bhp}, we prove that the homology of Rook-Brauer Algebra, interpreted as appropriate Tor-group, is isomorphic to that of symmetric group for all degrees under the assumption that $\epsilon$ in $R$ is invertible; furthermore, we also prove the homology of the Motzkin algebras vanishes in positive degrees under the same assumption.  These results thereby establish homological stability of both algebras. 
\end{abstract}

\maketitle
\section{Introduction}
The Rook-Brauer Algebras and Motzkin Algebras are part of the family of diagram algebras that includes the Partition Algebra, Rook Algebra, Temperley-Lieb Algebra, Brauer Algebra, and many more.  In recent years, homology of these algebras has been studied extensively by various authors (see \cite{bhp} for partition algebras, \cite{bh} for Temperley-Lieb Algebras, \cite{bhp1} for Brauer Algebras, \cite{b} for Rook and Rook-Brauer Algebras with restriction on parameters, see \cite{fg} for Tanabe algebras, uniform block permutation algebras and totally propagating partition algebras). \\
\indent The results of most diagram algebras mentioned above are parameters-dependent i.e certain conditions, most notably the invertibility of the parameter, must be imposed to ensure the validity of these results.  The Rook-Brauer algebra has two defining parameters, $\epsilon$ and $\delta$ (which will be recalled later), and in \cite{b}, Guy Boyde attempted to use the concept of idempotents to demonstrate that the homology of Rook-Brauer algebras is isomorphic to that of symmetric groups in all degrees, provided that $\epsilon$ is invertible. However, as noted by Andrew Fisher and Daniel Graves in their paper (see Remark 7.1.1 \cite{fg1}), the argument presented was found to be incorrect. Fisher and Graves, by additionally assuming the invertibility of $\delta$, were able to refine and correct the approach, ultimately proving that the homology of the Rook-Brauer algebras is indeed isomorphic to that of the symmetric groups in all degrees (see Theorem 7.1.5 \cite{fg1}).  Furthermore, under the same assumption on $\delta$ and $\epsilon$, they also proved that the homology of the Motzkin algebras vanishes in positive degrees (see Theorem 7.1.6 \cite{fg1}). \\
\indent In this paper, we employ the technique of {\it inductive resolution} pioneered by Boyd, Hepworth and Patzt in \cite{bhp} to prove the same results as those of Fisher and Graves for Rook-Brauer and Motzkin algebras while \emph{only requiring the invertibility of $\epsilon$}.  More specifically, under the assumption that $\epsilon \in R$ is invertible and for any $\delta \in R$ where $R$ is a unital commutative ring, we prove that the homology of the Rook-Brauer algebra is isomorphic to that of the symmetric group for all degrees and the homology of the Motzkin Algebras vanishes in positive degrees. %By making modifications to the proof of the above result, we can also prove homology of the Motzkin Algebras vanishes in positive degrees, under the same assumption. 

\section{Main Result}
\indent We now introduce the background necessary to discuss our main results.  For a unital commutative ring $R$, the {\it Rook-Brauer Algebras} $\RBr_n = \RBr_n(R, \delta,\epsilon)$ with parameters $\delta, \epsilon \in R$ is a free $R$-module with basis consisting of partitions of the unions of the sets $[-n] \cup [n]$ whose blocks (components) have size $\le 2$ where $[-n] = \{-n,\ldots,-1\}$ and $[n] = \{1,\ldots,n\}$. Each basis element of $\RBr_n$ may be visualized by a diagram $\alpha$ with two vertical columns of $n$ nodes each, with the nodes on the left representing $-n$ through $-1$ and the nodes on the right representing $1$ through $n$, with paths connecting certain nodes.  Since each block has size at most 2, each basis element is represented by a diagram in which each node is connected to at most one other node and isolated nodes are allowed.  Below is an example of a diagram in $\RBr_5$.\\

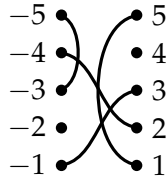
\begin{figure}[h!]
\centering
\begin{tikzpicture}
\fill (0,0)           circle (.75mm) node[left=2pt](a5) {$-5$};
\fill (0,\nodeheight) circle (.75mm) node[left=2pt](a4) {$-4$};
\multiply\nodeheight by 2
\fill (0,\nodeheight) circle (.75mm) node[left=2pt](a3) {$-3$};
\divide\nodeheight by 2
\multiply\nodeheight by 3
\fill (0,\nodeheight) circle (.75mm) node[left=2pt](a2) {$-2$};
\divide\nodeheight by 3
\multiply\nodeheight by 4
\fill (0,\nodeheight) circle (.75mm) node[left=2pt](a1) {$-1$};
\divide\nodeheight by 4

\fill (\nodewidth,0) circle (.75mm) node[right=2pt](b5) {$5$};
\fill (\nodewidth,\nodeheight) circle (.75mm) node[right=2pt](b4) {$4$};
\multiply\nodeheight by 2
\fill (\nodewidth,\nodeheight) circle (.75mm) node[right=2pt](b3) {$3$};
\divide\nodeheight by 2
\multiply\nodeheight by 3
\fill (\nodewidth,\nodeheight)  circle (.75mm) node[right=2pt](b2) {$2$};
\divide\nodeheight by 3
\multiply\nodeheight by 4
\fill (\nodewidth,\nodeheight)  circle (.75mm) node[right=2pt](b1) {$1$};
\divide\nodeheight by 5

\draw[e] (a1) to[out=0, in=180] (b3);
\draw[e] (a4) to[out=0, in=180]   (b2);
\draw[e] (a5) to[out=0, in=0] (a3);
\draw[e] (b1) to[out=180,in=180] (b5);

\end{tikzpicture}
\caption{Visualization of the partition $\{\{-5,-3\},\{-4,2\},\{-1,3\},\{5,1\},\{-2\},\{4\} \}$}
\label{fig:P5ex}
\end{figure}

Multiplying two diagrams, $\alpha$ and $\beta$, involves placing them side by side and identifying their middle nodes to create a new diagram, $\gamma$. In this process, any loop that appears in the middle is replaced by a factor of $\delta$, and any contractible component in the middle is replaced by a factor of $\epsilon$.\\

The {\it Motzkin Algebras} $\M_n = \M_n(R,\delta, \epsilon)$ is the $R$-subalgebra of $\RBr_n$ with basis given by planar Rook-Brauer diagrams i.e Rook-Brauer diagrams where connections cannot cross.  \\
\indent Diagrams in which every node on the left is connected to a single node on the right are called \emph{permutation diagrams}, and are in bijection with elements of the symmetric group $\fS_n$.  This gives us an inclusion and projection of algebras\\
\centerline{$\iota:R \fS_n \to \RBr_n$ \hspace{0.3in} and \hspace{0.3in}   $\pi: \RBr_n \to R \fS_n$}
where $\iota$ sends permutation diagrams to permutation diagrams and $\pi$ does the reverse while sending all remaining diagrams to $0$.  Note that $\pi \circ \iota$ is the identity map on $R \fS_n$. \\
\indent For each $n$, $\RBr_n$ and $\M_n$ are augmented algebras equipped with the augmented map that sends permutation diagrams to $1 \in R$ and all other diagrams to $0 \in R$.  This, in turn, makes $R$ a trivial module of $\RBr_n$ and $\M_n$ which we denoted by $\t$.  By \emph{homology} of Rook-Brauer algebras $\RBr_n$ and Motzkin algebras $\M_n$, we mean the Tor groups $\Tor_\ast^{\RBr_n}(\t,\t)$ and $\Tor_\ast^{\M_n}(\t,\t)$, respectively.\\
\indent The inclusion map $\iota:R \fS_n \to \RBr_n$ and projection map $\pi: \RBr_n \to R \fS_n$ also induce the corresponding maps $\iota_\ast$ and $\pi_\ast$ on homology groups such that $\pi_\ast\circ\iota_\ast$ is the identity.  Hence, homology of $\fS_n$ is a direct summand of the homology of $\RBr_n$.\\
We state our main results:
\begin{theorem}\label{main.thm}
    Suppose that $\epsilon$ is invertible in $R$ and for any $\delta \in R$, the inclusion map $\iota\colon R\mathfrak S_n \to \RBr_n(R,\delta,\epsilon)$ induces a map in homology \\
 \centerline{$\iota_\ast\colon H_\ast(\mathfrak S_n;\t) \longrightarrow \Tor^{\RBr_n(R,\delta,\epsilon)}_\ast(\t,\t)$}
that is an isomorphism for all degrees $\ast$.
\end{theorem}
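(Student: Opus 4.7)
My plan is to adapt the \emph{inductive resolution} technique of Boyd--Hepworth--Patzt \cite{bhp} to the Rook--Brauer setting. Since the excerpt has already noted that $\pi \circ \iota = \mathrm{id}_{R\mathfrak S_n}$ and hence $H_*(\mathfrak S_n;\t)$ sits as a direct summand of $\Tor^{\RBr_n}_*(\t,\t)$ via $\iota_*$, the content of the theorem is that $\iota_*$ is also surjective, i.e.\ that this summand exhausts the whole Tor group.

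I would proceed by induction on $n$, the case $n=0$ being trivial. At stage $n$ I would construct an augmented chain complex of $\RBr_n$-modules
\[
\cdots \to C_2 \to C_1 \to C_0 \to \t \to 0,
\]
in which each $C_p$ is induced from a proper subalgebra isomorphic (up to a twist) to $\RBr_{n-p-1}$; the combinatorial model for the degree-$p$ piece would be indexed by configurations of $p+1$ distinguished right-hand nodes together with a choice of how each is isolated or linked. Exactness of this augmented complex is the crucial step and will be verified by exhibiting an explicit $R$-linear contracting homotopy built out of the ``isolation idempotent'' $\epsilon^{-1}\cdot(\text{diagram with an isolated $n$-th node})$, which requires only $\epsilon \in R^\times$ and imposes no hypothesis on $\delta$.

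Once exactness is established, $C_\bullet$ is a projective resolution of $\t$, so applying $\t \otimes_{\RBr_n}(-)$ produces a complex computing $\Tor^{\RBr_n}_*(\t,\t)$. By change of rings together with the inductive hypothesis, each term $\t \otimes_{\RBr_n} C_p$ reduces to an expression in the homology of a smaller symmetric group; a spectral sequence argument (or a direct identification of the totalised complex, via the splitting $\pi$, with the bar construction for $\mathfrak S_n$) should then show that the Tor group is concentrated in the image of $\iota_*$, closing the induction.

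The main obstacle I anticipate is ensuring that every differential and every contracting homotopy in $C_\bullet$ is expressible using only ``isolation'' operations, whose coefficient is a power of $\epsilon$, and never using ``loop removal'' operations, whose coefficient is a power of $\delta$. This is precisely the issue that broke Boyde's original argument and forced Fisher--Graves to assume invertibility of $\delta$; circumventing it by building the resolution from carefully chosen half-diagrams rather than from a single global idempotent is the key place where the Boyd--Hepworth--Patzt methodology should pay off.
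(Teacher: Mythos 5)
Your high-level instinct is right: the argument should run on resolutions whose differentials and contracting homotopies are built only from isolation operations, scaled by powers of $\epsilon$, so that $\delta$ never appears in a denominator. But there is a structural gap in the middle of the proposal, and it is precisely the gap that makes this problem nontrivial.

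You write that once exactness of $\cdots \to C_1 \to C_0 \to \t \to 0$ is established, $C_\bullet$ is a \emph{projective} resolution of $\t$, so that $\t\otimes_{\RBr_n}(-)$ computes $\Tor$. That inference does not hold. Exactness says nothing about projectivity of the $C_p$; and if each $C_p$ is an induced module $\RBr_n\otimes_{\RBr_{n-p-1}}M_p$, then $C_p$ will typically \emph{not} be projective (or even flat) over $\RBr_n$, because $\RBr_n$ is not flat over its subalgebras $\RBr_{n-p-1}$. This is exactly the failure of Shapiro's Lemma that the paper flags at the start of Section 4 as the reason for introducing inductive resolution in the first place. If you tensor a merely exact complex of non-flat modules with $\t$, you lose exactness, and the resulting complex does not compute $\Tor^{\RBr_n}_*(\t,\t)$.

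The paper's route is set up specifically to avoid this. It does not build a global resolution of $\t$ at all. Instead it fixes $X\subseteq\{1,\dots,n\}$ and proves, by induction on $|X|$, that $\Tor^{\RBr_n}_*(\t,\RBr_n/J_X)$ vanishes in positive degrees (Theorem~\ref{vanishing.RBr}), using a length-two resolution
\[
0 \to \calA_{X,x}\oplus\calB_{X,x} \to \RBr_n/J_{X-\{x\}} \to \RBr_n/J_X \to 0
\]
whose terms are shown to have the required $\Tor$-vanishing (Lemma~\ref{modules.lemma}, Lemma~\ref{lemma.for.BX.x}), and whose exactness \emph{after} applying $\t\otimes_{\RBr_n}(-)$ is verified directly, not deduced from projectivity (Proposition~\ref{thm.for.inductive}). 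That is the content of the inductive resolution principle (Theorem~\ref{inductive-resolution-method}): you trade projectivity for two separate checks, $\Tor$-vanishing of the resolving terms and exactness of the tensored complex. The idempotents $\epsilon^{-1}T_x$ and $\epsilon^{-1}T_aV_{ab}$ then realise $\calA_{X,x}$ and $\calM_{X,\{a,b\}}$ as direct summands of quotients $\RBr_n/J_{X'}$ with $|X'|<|X|$, which is where only $\epsilon^{-1}$ is needed. Finally, the main theorem is obtained via a Shapiro-type statement (Theorem~\ref{strong.shapiro}) using the freeness of $\RBr_n/J_m$ over $R\fS_m$ (Lemma~\ref{lemma.quotient.by.Jm}) and the identification $\RBr_n/J_m\otimes_{R\fS_m}\t\cong\RBr_n\otimes_{\RBr_m}\t$ (Lemma~\ref{lemma.quotient.by.Jm.tensor}), with $m=n$. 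To repair your proposal you would need to either (a) stop claiming projectivity and instead verify the two hypotheses of the inductive resolution theorem for your complex $C_\bullet$, or (b) restructure the induction to run on $|X|$ as the paper does.
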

\noindent Under the same hypothesis, the homology of the Motzkin algebra vanishes in positive degrees. 
\begin{theorem} \label{homology.Motzkin}
    Suppose that $\epsilon$ is invertible in $R$ and for any $\delta \in R$, \\
    \centerline{$\Tor^{\M_n(R,\delta,\epsilon)}_\ast(\t,\t) \cong \begin{cases} = \t~,~\ast = 0 \\ = 0~,~\ast > 0
        \end{cases}$}
\end{theorem}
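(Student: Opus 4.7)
The plan is to prove Theorem \ref{homology.Motzkin} by running the inductive-resolution argument behind Theorem \ref{main.thm} inside the planar subalgebra. The key observation is that the only planar permutation diagram is the identity, so the Motzkin analogue of the embedding $R\fS_n \hookrightarrow \RBr_n$ is simply the unit $R \hookrightarrow \M_n$ of the augmentation, sending $1$ to the identity diagram. Since $H_\ast(\{1\};\t)$ is $\t$ in degree $0$ and vanishes in positive degrees, executing the same machinery should deliver exactly the Tor computation claimed.

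I would proceed by induction on $n$. The base cases $n = 0, 1$ can be checked directly: $\M_0 = R$, and $\M_1$ has a two-element basis, so its Tor groups are immediate. For the inductive step, I would construct inside $\M_n$ a free resolution $P_\bullet \to \t$ whose terms take the form $\M_n \otimes_{\M_{n-k}} X_k$, with each $X_k$ built from small planar diagrams and with differentials coming from planar attaching and detaching operations at the rightmost nodes. The crucial ingredient is a planar idempotent $e = \epsilon^{-1} c$, where $c$ is a planar cap-diagram satisfying $c^2 = \epsilon c$; the invertibility of $\epsilon$ is used here and only here, to promote $c$ to an honest idempotent and to build a contracting homotopy that exhibits exactness of $P_\bullet$. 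Applying $\t \otimes_{\M_n}(-)$ to the resolution and invoking the inductive hypothesis to identify the homology of the resulting complex then yields the claimed vanishing.

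The main obstacle is adapting every step of the argument for Theorem \ref{main.thm} so that it produces only planar diagrams. In the Rook-Brauer setting one freely employs the symmetric group (which is non-planar as soon as $n \geq 2$), and the planar restriction forces a reorganization of the resolution's bookkeeping around planar partitions rather than arbitrary ones; in particular, one must verify that the idempotents, the differentials, and the contracting homotopy provided by the inductive-resolution machinery all preserve planarity when applied to elements of $\M_n$. Once this has been done carefully, the computation proceeds essentially word-for-word as in the Rook-Brauer case, with $\fS_n$ replaced by the trivial group, so that $\Tor^{\M_n(R,\delta,\epsilon)}_\ast(\t,\t) \cong \t$ in degree $0$ and vanishes in positive degrees.
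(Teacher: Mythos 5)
Your high-level observation is correct: because the only planar permutation diagram is the identity, the symmetric group in the Rook-Brauer argument collapses to the trivial group, and the conclusion should be $\Tor^{\M_n}_\ast(\t,\t) \cong \t$ concentrated in degree $0$. However, the route you propose is not the paper's, and as written it has a real gap.

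You propose to induct on $n$, building a free resolution $P_\bullet \to \t$ with terms of the form $\M_n \otimes_{\M_{n-k}} X_k$ and a contracting homotopy powered by the idempotent $\epsilon^{-1}c$. But you never construct $P_\bullet$: the modules $X_k$, the differentials, and the contracting homotopy are all asserted rather than exhibited, so there is nothing to check. More seriously, inducing along the subalgebra chain $\M_{n-k} \subseteq \M_n$ runs directly into the obstruction that motivates the inductive-resolution technique in the first place (see the opening of Section 4 of the paper): $\M_n$ is not a group algebra, may fail to be flat over $\M_{n-k}$, and so Shapiro-style transfer of $\Tor$ along this inclusion is not available without further argument. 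You would in effect have to reprove the analogue of Theorem \ref{strong.shapiro} for $\M_n$ before your plan could even get started, and the proposal does not address this.

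The paper's proof is shorter and structurally different. It fixes $n$ and inducts on the cardinality of a subset $X \subseteq [n]$, via the left ideals $J_X$. Theorem \ref{vanishing.Motzkin} shows $\Tor^{\M_n}_\ast(\t, \M_n/J_X)$ vanishes in positive degrees, by resolving $\M_n/J_X$ through $\M_n/J_{X - \{x\}}$ and the modules $\calA_{X,x}$, $\calB_{X,x}$ (Proposition \ref{thm.for.inductive.part2}), and then realizing those as direct summands of $\M_n/J_{X'}$ for smaller $X'$. Theorem \ref{homology.Motzkin} then falls out immediately by taking $X = [n]$ and observing that $\M_n/J_n \cong \t$ because the identity is the only planar permutation diagram. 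Finally, your claim that the argument proceeds "essentially word-for-word as in the Rook-Brauer case" is not accurate: the decomposition of $\calB_{X,x}$ in the Motzkin setting (Lemma \ref{B_X.x.for.Motzkin}) requires the planarity-specific modules $\bY_{P,\{x,y\}}$, which track the interior of a right-to-right connection via partitions $P$ of the interval $[x,y]$; the Rook-Brauer analogue $\calM_{X,\{a,b\}}$ does not suffice, and Lemmas \ref{prop.Y_P} and \ref{B_X.x.for.Motzkin} are genuinely new content.
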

Combining these theorems with the homological stability of symmetric groups proved by Nakaoka in \cite{nak} in which the stable range is sharp yields the following corollary. 
\begin{corollary}
    Suppose that $\epsilon$ is invertible in $R$ and for any $\delta \in R$, the Rook-Brauer algebras satisify homological stability i.e the inclusion $\RBr_{n-1}(R,\delta, \epsilon) \hookrightarrow \RBr_n(R,\delta,\epsilon)$ induces a map \\
    \centerline{$\Tor^{\RBr_{n-1}(R,\delta,\epsilon)}_i(\t,\t)
	\longrightarrow
	\Tor^{\RBr_{n}(R,\delta,\epsilon)}_i(\t,\t)$}
    that is an isomorphism in degrees $n\geq 2i+1$, and this stable range is sharp. \\
    Under the same assumption, the Motzkin algebras also satisfies homological stability.  
\end{corollary}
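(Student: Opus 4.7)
The plan is to derive both parts of the corollary directly from Theorems~\ref{main.thm} and~\ref{homology.Motzkin} combined with Nakaoka's classical homological stability result for symmetric groups. For the Rook-Brauer case, I would first verify that $\iota$ is natural with respect to the standard inclusions in $n$: the diagram inclusion $\RBr_{n-1}\hookrightarrow\RBr_n$ (extending a diagram by a single horizontal strand from $-n$ to $n$) and the group inclusion $R\fS_{n-1}\hookrightarrow R\fS_n$ (fixing the last strand) fit into a commuting square with the two copies of $\iota$. This is transparent at the level of diagrams, since both routes produce the same extended permutation diagram. Passing to Tor, Theorem~\ref{main.thm} identifies the two vertical maps as isomorphisms.

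With that square in hand, I would apply Nakaoka's theorem~\cite{nak}, which states that the inclusion induces an isomorphism $H_i(\fS_{n-1};\t)\to H_i(\fS_n;\t)$ precisely when $n\geq 2i+1$, and that this range is sharp. A standard diagram chase then transfers this isomorphism (and its failure outside the range) across the vertical isomorphisms to the map $\Tor^{\RBr_{n-1}}_i(\t,\t)\to\Tor^{\RBr_n}_i(\t,\t)$, yielding homological stability for Rook-Brauer algebras with the sharp range $n\geq 2i+1$.

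For the Motzkin case, Theorem~\ref{homology.Motzkin} already computes $\Tor^{\M_n}_*(\t,\t)$: it is $\t$ in degree $0$ and zero in all higher degrees, independently of $n$. Consequently, in positive degrees the inclusion $\M_{n-1}\hookrightarrow\M_n$ induces the zero map between zero groups, while in degree $0$ it induces the identity map on $\t$ because both groups are identified with the common augmentation. Homological stability for Motzkin algebras therefore holds trivially, in fact in every degree.

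The only genuine obstacle, modest as it is, lies in checking the commutativity of the compatibility square for Rook-Brauer with enough care to conclude that sharpness really does transfer across the vertical isomorphisms. Everything else is either immediate from the two main theorems or black-boxed through Nakaoka's result, so the corollary should follow without any additional technical machinery.
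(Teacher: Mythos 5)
Your proposal is correct and takes essentially the same approach as the paper, which simply notes that the corollary follows by combining Theorems~\ref{main.thm} and~\ref{homology.Motzkin} with Nakaoka's sharp stability range for symmetric groups; you supply the (straightforward but worth stating) naturality square for $\iota$ and observe that the Motzkin case is trivial from the vanishing in Theorem~\ref{homology.Motzkin}.
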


\section{Rook-Brauer and Motzkin Algebras}
In this section, we recall the definition of the Rook-Brauer and Motzkin algebras, certain diagrams and results that will be used in later sections. 
\subsection{Rook-Brauer algebras}
\begin{definition}
    For a unital commutative ring $R$, the {\it Rook-Brauer algebra} $\RBr_n = \RBr_n(R, \delta,\epsilon)$ with parameters $\delta, \epsilon \in R$ is a free $R$-module with basis consisting of partitions of the unions of the sets $[-n] \cup [n]$ whose blocks (components) have size $\le 2$ where $[-n] = \{-n,\ldots,-1\}$ and $[n] = \{1,\ldots,n\}$. Each basis element of $\RBr_n$ may be visualized by a diagram $\alpha$ with two vertical columns of $n$ nodes each, with the nodes on the left representing $-n$ through $-1$ and the nodes on the right representing $1$ through $n$, with paths connecting certain nodes.  Since each block has size at most 2, each basis element is represented by a diagram in which each node is connected to at most one other node and isolated nodes are allowed. 
\end{definition}
For diagrams $\alpha$ and $\beta$, we define the product $\alpha \beta$ in the following way. First we conjoin the diagrams $\alpha$ and $\beta$ by identifying each node $k$ on the right of $\alpha$ with the node $-k$ on the left of $\beta$, thus forming a diagram with $3$ columns of $n$ nodes each. We then form a new partition $\gamma$ of $[-n] \cup [n]$ in which two elements belong to the same block if and only if the corresponding nodes in the left or right column of the conjoined diagram are connected. We define $\alpha \beta = \delta^r \epsilon^s \cdot \gamma$ where $r$ is the number of loops and $s$ is the number of contractible components in the middle column (see example below).

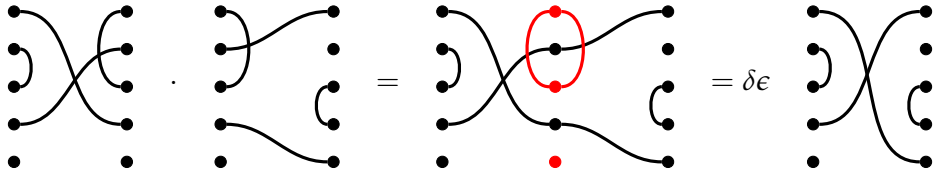
\begin{figure}[h!]
\centering
\begin{tikzpicture}[x=1.5cm,y=-.5cm,baseline=-1.05cm]

\node[v] (a1) at (0,0) {};
\node[v] (a2) at (0,1) {};
\node[v] (a3) at (0,2) {};
\node[v] (a4) at (0,3) {};
\node[v] (a5) at (0,4) {};

\node[v] (b1) at (1,0) {};
\node[v] (b2) at (1,1) {};
\node[v] (b3) at (1,2) {};
\node[v] (b4) at (1,3) {};
\node[v] (b5) at (1,4) {};

\draw[e] (a1) to[out=0, in=180] (b4);
\draw[e] (a2) to[out=0, in=0] (a3);
\draw[e] (a4) to[out=0, in=180]   (b2);
\draw[e] (b1) to[out=180, in=180] (b3);

\end{tikzpicture}
\quad
$\cdot$
\quad
\begin{tikzpicture}[x=1.5cm,y=-.5cm,baseline=-1.05cm]

\node[v] (b1) at (0,0) {};
\node[v] (b2) at (0,1) {};
\node[v] (b3) at (0,2) {};
\node[v] (b4) at (0,3) {};
\node[v] (b5) at (0,4) {};

\node[v] (c1) at (1,0) {};
\node[v] (c2) at (1,1) {};
\node[v] (c3) at (1,2) {};
\node[v] (c4) at (1,3) {};
\node[v] (c5) at (1,4) {};

\draw[e] (b1) to[out=0,in=0] (b3);
\draw[e] (b2) to[out=0,in=180] (c1);
\draw[e] (b4) to[out=0,in=180] (c5);
\draw[e] (c3) to[out=180,in=180] (c4);

\end{tikzpicture}
\quad
$=$
\quad
\begin{tikzpicture}[x=1.5cm,y=-.5cm,baseline=-1.05cm]

\node[v] (a1) at (0,0) {};
\node[v] (a2) at (0,1) {};
\node[v] (a3) at (0,2) {};
\node[v] (a4) at (0,3) {};
\node[v] (a5) at (0,4) {};

\node[red,v] (b1) at (1,0) {};
\node[v] (b2) at (1,1) {};
\node[red,v] (b3) at (1,2) {};
\node[v] (b4) at (1,3) {};
\node[red,v] (b5) at (1,4) {};

\node[v] (c1) at (2,0) {};
\node[v] (c2) at (2,1) {};
\node[v] (c3) at (2,2) {};
\node[v] (c4) at (2,3) {};
\node[v] (c5) at (2,4) {};

\draw[e] (a1) to[out=0, in=180] (b4);
\draw[e] (b4) to[out=0, in=180] (c5);
\draw[e] (a2) to[out=0, in=0]   (a3);
\draw[e] (a4) to[out=0, in=180] (b2);
\draw[e] (b2) to[out=0, in=180] (c1);
\draw[red,e] (b1) to[out=180,in=180] (b3);
\draw[red,e] (b1) to[out=0,in=0] (b3);
\draw[e] (c3) to[out=180,in=180] (c4);

\end{tikzpicture}
\quad
$ = \delta \epsilon$
\quad
\begin{tikzpicture}[x=1.5cm,y=-.5cm,baseline=-1.05cm]

\node[v] (a1) at (0,0) {};
\node[v] (a2) at (0,1) {};
\node[v] (a3) at (0,2) {};
\node[v] (a4) at (0,3) {};
\node[v] (a5) at (0,4) {};

\node[v] (c1) at (1,0) {};
\node[v] (c2) at (1,1) {};
\node[v] (c3) at (1,2) {};
\node[v] (c4) at (1,3) {};
\node[v] (c5) at (1,4) {};

\draw[e] (a1) to[out=0, in=180] (c5);
\draw[e] (a2) to[out=0, in=0] (a3);
\draw[e] (a4) to[out=0, in=180]  (c1);
\draw[e] (c3) to[out=180,in=180] (c4);

\end{tikzpicture}
\caption{Multiplication in $\RBr_5(\delta,\epsilon)$.}
\label{fig:MultP5}
\end{figure}

\noindent Here, the final diagram is multiplied by a factor of $\delta$ for the red loop and a factor of $\epsilon$ for the isolated node in the middle.\\
\indent We also point out here three specific types of diagram (see Figure \ref{three.types}) that will be used later: 
\begin{itemize}
    \item For~$1\leq i\leq n-1$, $S_i$ is the permutation diagram corresponding to the partition with blocks of pairs~$\{-j,j\}$ for~$j\neq i, i+1$, together with~$\{-(i+1),i\}$ and $\{-i,(i+1)\}$. These generate the group ring of the symmetric group, $\fS_n$, as a subalgebra of~$\RBr_n$.
    \item For~$1\leq i\neq j\leq n-1$, $V_{ij}$ is the diagram corresponding to the partition with blocks of pairs~$\{-k,k\}$ for~$k\neq i, j$ and two blocks consists of $\{-i,-j\}$ and $\{i,j\}$.
    \item For~$1\leq i\leq n$, $T_i$ is the diagram corresponding to the partition with blocks of pairs~$\{-j,j\}$ for~$j\neq i$ and two singleton blocks~$\{-i\}$ and~$\{i\}$.
\end{itemize}
 
\begin{figure}[h!] \label{three.types}
    \centering
    \[
    \begin{tikzpicture}[x=1.5cm,y=-.5cm,baseline=-1.05cm]
    
    \node[v] (a1) at (0,0) {};
    \node[v] (a2) at (0,1) {};
    \node[v] (a3) at (0,2) {};
    \node[v] (a4) at (0,3) {};
    
    \node[v] (b1) at (1,0) {};
    \node[v] (b2) at (1,1) {};
    \node[v] (b3) at (1,2) {};
    \node[v] (b4) at (1,3) {};
    
    \draw[e] (a1) to[out=0, in=180] (b1);
    \draw[e] (a2) to[out=0, in=180] (b3);
    \draw[e] (a3) to[out=0, in=180]   (b2);
    \draw[e] (a4) to[out=0, in=180] (b4);

    \node at (0.5,4) {$S_2$};
    \end{tikzpicture}
    \qquad \qquad
    \begin{tikzpicture}[x=1.5cm,y=-.5cm,baseline=-1.05cm]
    
    \node[v] (a1) at (0,0) {};
    \node[v] (a2) at (0,1) {};
    \node[v] (a3) at (0,2) {};
    \node[v] (a4) at (0,3) {};
    
    \node[v] (b1) at (1,0) {};
    \node[v] (b2) at (1,1) {};
    \node[v] (b3) at (1,2) {};
    \node[v] (b4) at (1,3) {};
   
    \draw[e] (a1) to[out=0, in=180] (b1);
    %\draw[e] (a2) to[out=0, in=180] (b2);
    \draw[e] (a3) to[out=0, in=180] (b3);
    %\draw[e] (a4) to[out=0, in=180] (b4);
    \draw[e] (a2) to[out=0, in=0] (a4);
    \draw[e] (b2) to[out=180, in=180] (b4);
    
    \node at (0.5,4) {$V_{13}$};
    \end{tikzpicture}
    \qquad\qquad
    \begin{tikzpicture}[x=1.5cm,y=-.5cm,baseline=-1.05cm]
    
    \node[v] (a1) at (0,0) {};
    \node[v] (a2) at (0,1) {};
    \node[v] (a3) at (0,2) {};
    \node[v] (a4) at (0,3) {};
    
    \node[v] (b1) at (1,0) {};
    \node[v] (b2) at (1,1) {};
    \node[v] (b3) at (1,2) {};
    \node[v] (b4) at (1,3) {};
   
    \draw[e] (a1) to[out=0, in=180] (b1);
    \draw[e] (a3) to[out=0, in=180] (b3);
    \draw[e] (a4) to[out=0, in=180] (b4);
    
    \node at (0.5,4) {$T_3$};
    \end{tikzpicture}
    \]
    \caption{The elements $S_2,V_{13},T_3\in\RBr_4$}
\end{figure}
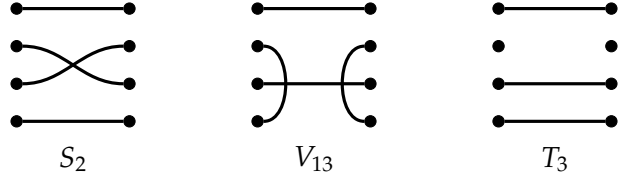

Recall that permutation diagrams of $\RBr_n$ are diagrams where each node on the left is connected to a node on the right i.e there are no isolated nodes or same-side connections in these diagram.
\begin{definition} \label{trivial.module}
    (Trivial module $\t$) For any $n$, we define the \emph{trivial} $\RBr_n$-module $\t$ to be a single copy of $R$ where permutation diagrams acts as the identity and all other diagrams acts as zero.  
\end{definition}

\begin{definition}
    For~$m\leq n$, we can view~$\RBr_m$ as a subalgebra of~$\RBr_n$ as follows: given a diagram $\alpha$ in $\RBr_m$, we can add $n-m$ horizontal connections \emph{below} $\alpha$ to form a diagram in $\RBr_n$.
    Then, under the action of this subalgebra,~$\RBr_n$ can be viewed as a left $\RBr_n$-module and a right~$\RBr_m$-module, and we obtain the induced left $\RBr_n$-module~$\RBr_n \otimes_{\RBr_m} \t$.
\end{definition}

It turns out that the left $\RBr_n$-module $\RBr_n \otimes_{\RBr_m} \t$ is a free $R$-module whose basis are given in terms of special diagrams as described by the following Proposition from \cite{mt}.
\begin{proposition}
    (\cite{mt}, Proposition 3.1) $\RBr_n \otimes_{\RBr_m} \t$ is a free $R$-module with a basis consisting of diagrams with $n$ nodes on the left, $m$ nodes on the right under a box containing the top $n-m$ nodes subject to these conditions: 
    \begin{itemize}
        \item Each node is either connected to the box, to another node, or remains isolated. The box must be connected to exactly $n-m$ nodes.
        \item Any diagram in which the box is connected to fewer than $n-m$ nodes is identified with 0.
    \end{itemize}
\end{proposition}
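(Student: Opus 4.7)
The plan is to establish the proposition by constructing explicit inverse $R$-linear maps between the free $R$-module $F$ on the described box diagrams and the tensor product $\RBr_n \otimes_{\RBr_m}\t$. Since $\t$ sends permutation diagrams to $1$ and all diagrams in the augmentation ideal of $\RBr_m$ to $0$, the tensor product is the quotient of $\RBr_n$ in which right multiplication by a non-permutation diagram of $\RBr_m$ vanishes, while right multiplication by a permutation diagram yields an equivalent element. Viewed diagrammatically, these relations collapse the right-column nodes that sit inside the box --- up to permutation --- and kill any diagram whose box-nodes fail to propagate out to an external partner.

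For the forward map $\Phi\colon F \to \RBr_n \otimes_{\RBr_m}\t$, I would lift each box diagram $d$ to an $\RBr_n$-diagram $\tilde d$ by fixing an arbitrary bijection between the $n-m$ box-connections of $d$ and the top $n-m$ right-column nodes, realising the remaining external pairings and isolated nodes unchanged, and setting $\Phi(d)=\tilde d \otimes 1$. Two bijections differ by a permutation of the top $n-m$ nodes, and under the embedding such a permutation is implemented by a permutation diagram in $\RBr_m$ acting trivially on $\t$, so $\Phi$ is well-defined.

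For the inverse $\Psi$, I would proceed on $\RBr_n$-basis diagrams: given $\alpha\in\RBr_n$, inspect its top $n-m$ right-column nodes, setting $\Psi(\alpha)=0$ if any of them is isolated or paired with another top node, and otherwise recording the box diagram obtained by forgetting which specific top node each external partner is paired with. The critical step is verifying that $\Psi$ descends to the tensor product, equivalently that $\Psi(\alpha\cdot\beta)=(\beta\cdot 1)\Psi(\alpha)$ for every $\beta\in\RBr_m$. For $\sigma\in\mathfrak S_m$ this is immediate since $\alpha\cdot\sigma$ merely permutes the top right nodes; for a non-permutation generator $T_i$ or $V_{ij}$, a diagrammatic calculation shows that $\alpha\cdot T_i$ always isolates a top right node, while $\alpha\cdot V_{ij}$ always pairs two top right nodes together, so $\Psi$ applied to the product vanishes and matches $\beta\cdot 1=0$.

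The two compositions $\Psi\circ\Phi$ and $\Phi\circ\Psi$ are then inverses essentially by construction: $\Psi\circ\Phi$ forgets the arbitrary bijection that $\Phi$ just chose, while $\Phi\circ\Psi$ returns a lift that differs from the original $\alpha$ only by permuting the top right nodes, which is absorbed in the tensor product. The main obstacle is the well-definedness check for $\Psi$: for a non-permutation $\beta$, the product $\alpha\cdot\beta$ can introduce scalar factors of $\epsilon$ and $\delta$ from middle loops and contractible components, and one must verify through a case analysis on the generators $T_i$ and $V_{ij}$ that regardless of these factors the result always exhibits a top-node defect that forces $\Psi(\alpha\cdot\beta)=0$.
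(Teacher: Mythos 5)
The paper offers no proof of this proposition to compare against: it is quoted directly from \cite{mt}. Your overall strategy---building mutually inverse $R$-linear maps $\Phi$ and $\Psi$ between the free module on box diagrams and $\RBr_n\otimes_{\RBr_m}\t$, with the descent of $\Psi$ through the relations as the crux---is the natural and standard way to prove such a statement.

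There is, however, a genuine gap, and it sits exactly at the step you identify as critical. The subalgebra $\RBr_m\subseteq\RBr_n$ acts on $m$ of the right-hand nodes (the set $\{n-m+1,\dots,n\}$, as the definition of $J_m$ and Lemma \ref{lemma.quotient.by.Jm.tensor} make explicit), so its permutation diagrams permute only those $m$ nodes. Your well-definedness argument for $\Phi$ asserts that two lifts of a box diagram differ by ``a permutation of the top $n-m$ nodes\dots implemented by a permutation diagram in $\RBr_m$''; this is false unless $m=n-m$, since $\fS_m$ cannot realize an arbitrary permutation of $n-m$ strands. Likewise your $\Psi$ inspects the top $n-m$ right-hand nodes, whereas descent through $-\otimes_{\RBr_m}\t$ requires inspecting precisely the $m$ nodes on which $\RBr_m$ acts: a non-permutation basis diagram $\beta\in\RBr_m$ always leaves an isolated node or a same-side arc among \emph{those} $m$ nodes of $\alpha\beta$, and that is the feature $\Psi$ must detect. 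The consistent bookkeeping (which the printed statement itself appears to scramble, swapping the roles of $m$ and $n-m$) is that the box encloses the $m$ active nodes and must receive exactly $m$ connections, leaving $n-m$ free right-hand nodes. A sanity check: for $m=n$ the module is $\t$, of rank one, which forces every left node into the box; an empty box with $n$ free right nodes would instead return all of $\RBr_n$. Once the indices are corrected your argument goes through, but you should also (i) verify descent of $\Psi$ for \emph{every} non-permutation basis diagram of $\RBr_m$, not only the generators $T_i$ and $V_{ij}$ (the observation above does this in one stroke), and (ii) justify that a diagram with an isolated active node or an active-to-active arc is genuinely zero in the tensor product with coefficient one, i.e.\ that it factors as $\alpha'\beta$ with $\beta$ a non-permutation diagram and no stray power of $\epsilon$; the parity argument used in the proof of Lemma \ref{modules.lemma} supplies exactly this.
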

Some (non-)examples of these basis diagrams in $\RBr_5 \otimes_{\RBr_2} \t$ are given below. 
\begin{figure}[h!]
\centering
\begin{tikzpicture}[x=1.5cm,y=-.5cm,baseline=-1.05cm]

\node[v] (b1) at (0,0) {};
\node[v] (b2) at (0,1) {};
\node[v] (b3) at (0,2) {};
\node[v] (b4) at (0,3) {};
\node[v] (b5) at (0,4) {};

\node[v] (c1) at (1,0) {};
\node[v] (c2) at (1,1) {};
\node[v] (c3) at (1,2) {};
\node[v] (c4) at (1,3) {};
\node[v] (c5) at (1,4) {};

\draw[e] (b2) to[out=0,in=0] (b5);
\draw[e] (b1) to[out=0,in=180] (c1);
%\draw[e] (b4) to[out=0, in=180] (c4);
\draw[e] (b3) to[out=0, in=180] (c2);
\draw[e] (c4) to[out=180,in=180] (c3);

\draw[e,fill=white] (0.9,-0.2) rectangle (1.1,2.2);
\node at (c2) {$3$};
\end{tikzpicture}
\quad
,
\quad
\begin{tikzpicture}[x=1.5cm,y=-.5cm,baseline=-1.05cm]

\node[v] (b1) at (0,0) {};
\node[v] (b2) at (0,1) {};
\node[v] (b3) at (0,2) {};
\node[v] (b4) at (0,3) {};
\node[v] (b5) at (0,4) {};

\node[v] (c1) at (1,0) {};
\node[v] (c2) at (1,1) {};
\node[v] (c3) at (1,2) {};
\node[v] (c4) at (1,3) {};
\node[v] (c5) at (1,4) {};

\draw[e] (b3) to[out=0,in=0] (b5);
\draw[e] (b4) to[out=0,in=180] (c1);
%\draw[e] (b4) to[out=0, in=180] (c4);
\draw[e] (c4) to[out=180, in=180] (c2);
\draw[e] (c5) to[out=180,in=180] (c3);

\draw[e,fill=white] (0.9,-0.2) rectangle (1.1,2.2);
\node at (c2) {$3$};
\end{tikzpicture}
\quad
,
\quad
\begin{tikzpicture}[x=1.5cm,y=-.5cm,baseline=-1.05cm]

\node[v] (b1) at (0,0) {};
\node[v] (b2) at (0,1) {};
\node[v] (b3) at (0,2) {};
\node[v] (b4) at (0,3) {};
\node[v] (b5) at (0,4) {};

\node[v] (c1) at (1,0) {};
\node[v] (c2) at (1,1) {};
\node[v] (c3) at (1,2) {};
\node[v] (c4) at (1,3) {};
\node[v] (c5) at (1,4) {};

\draw[e] (b3) to[out=0,in=0] (b5);
\draw[e] (b2) to[out=0,in=180] (c1);
\draw[e] (b1) to[out=0, in=180] (c4);
%\draw[e] (c4) to[out=180, in=180] (c2);
\draw[e] (c5) to[out=180,in=180] (c3);

\draw[e,fill=white] (0.9,-0.2) rectangle (1.1,2.2);
\node at (c2) {$3$};
\end{tikzpicture}
\quad
$= 0 \in \RBr_5 \otimes_{\RBr_2} \t$
\end{figure}

\subsection{Motzkin algebras} 
\begin{definition}
    A \emph{Motzkin $n$-diagram} is a planar Rook-Brauer $n$-diagram i.e diagrams where connections cannot cross.  The \emph{Motzkin algebras}, $\M_n = \M_n(R,\delta,\epsilon)$, is an $R$-subalgebra of $\RBr_n(R,\delta,\epsilon)$ with basis given by all Motzkin $n$-diagrams. 
\end{definition}
Note that, due to the restriction of planarity, the only permutation diagram in $\M_n$ is the \emph{identity} diagram i.e diagram with all horizontal left-to-right connections.  Since $\M_n$ is a subalgebra of $\RBr_n$, we also have the \emph{trivial} $\M_n$-module $\t$ as in Def.~  
\ref{trivial.module}.\\
We recall the definition of link states for a Motzkin diagram below. Although this concept can be defined more generally for Rook–Brauer diagrams, our focus here is solely on the Motzkin diagram.
\begin{definition}
    By slicing vertically down the middle of a Motzkin $n$-diagram, we obtain two "half-diagrams" which are called \emph{left link state} and \emph{right link state} of the diagram.  Explicitly, a link state consists of a column of $n$ nodes where at each node, we have one of the following situations: 
    \begin{itemize}
        \item The node has a hanging edge called a \emph{defect}  i.e an edge whose other end is not attached to anything. 
        \item The node is connected to exactly one other node.
        \item The node is an isolated node.
    \end{itemize}
\end{definition}
Since a Motzkin diagram is a planar diagram, the right and left link state of it is also planar.  Below is an example of a Motzkin diagram and its right link state. 

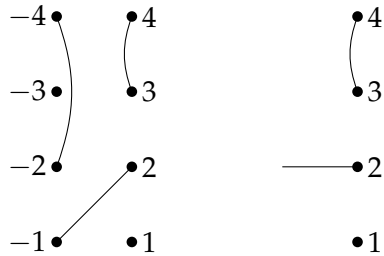
\begin{figure}[h!]

\begin{tikzpicture}
\fill (0,0) circle[radius=2pt];
\fill (0,1) circle[radius=2pt];
\fill (0,2) circle[radius=2pt];
\fill (0,3) circle[radius=2pt];
\fill (1,0) circle[radius=2pt];
\fill (1,1) circle[radius=2pt];
\fill (1,2) circle[radius=2pt];
\fill (1,3) circle[radius=2pt];   
\fill (4,0) circle[radius=2pt];
\fill (4,1) circle[radius=2pt];
\fill (4,2) circle[radius=2pt];
\fill (4,3) circle[radius=2pt];

\draw (0,0) node[left] {$-1$};
\draw (0,1) node[left] {$-2$};
\draw (0,2) node[left] {$-3$};
\draw (0,3) node[left] {$-4$};
\draw (1,0) node[right] {$1$};
\draw (1,1) node[right] {$2$};
\draw (1,2) node[right] {$3$};
\draw (1,3) node[right] {$4$};
\draw (4,0) node[right] {$1$};
\draw (4,1) node[right] {$2$};
\draw (4,2) node[right] {$3$};
\draw (4,3) node[right] {$4$};

\draw (0,0) -- (1,1);
%\draw (0,3) -- (1,2);
\path[-] (0,3) edge [bend left=20] (0,1);
\path[-] (1,2) edge [bend left=20] (1,3);
\draw (4,1) -- (3,1);
%\draw (4,2) -- (3,2);
\path[-] (4,2) edge [bend left=20] (4,3);
\end{tikzpicture}

\caption{A Motzkin $4$-diagram and its right link state.}
\end{figure}

\begin{definition} \label{splices.deletion}
    Given a right link state of a Motzkin $n$-diagram, we can remove two defects and replace them with a right-to-right connection joining the two vertices. This operation is called a \emph{splice}. We can also remove a defect, leaving an isolated vertex. This operation is called a \emph{deletion}.  Note that all splice operations must respect the planarity condition.  
\end{definition}

\section{Inductive Resolution}
A key tool in many proofs of the homological stability of groups is Shapiro's Lemma. However, when dealing with algebras that do not arise as group algebras, Shapiro's Lemma does not always apply  as the algebra $A$ may fail to be flat over its subalgebras (see sect 4 of \cite{bhp1} and sect 1 of \cite{bh}).  To address this issue, we will use \emph{inductive resolution} prove an analogue (more general) version of Shapiro's Lemma for the Rook-Brauer Algebra. \\
Simply put, the technique of inductive resolution provides a method for proving the vanishing of $\Tor^A_{\ast}(M,-)$ of a given module by resolving it through modules that already possess this property- hence the term "inductive" resolution. We formally state the principle of inductive resolution as introduced in \cite{bhp}.
\begin{theorem}\label{inductive-resolution-method}
    (\cite{bhp}, Theorem 3.1) Let $A$ be an algebra over a ring $R$, and let $M$ be a right $A$-module.
    Suppose that $N$ is a left $A$-module equipped with a resolution $Q_\ast\to N$ with the following two properties:
    \begin{itemize}
        \item
        $\Tor^A_\ast(M,Q_j)$ vanishes in positive degrees for all $j\geqslant 0$.
        \item
        $M\otimes_A Q_\ast\to M\otimes_A N$ is a resolution. 
    \end{itemize}
    Then $\Tor^A_\ast(M,N)$ vanishes in positive degrees.
\end{theorem}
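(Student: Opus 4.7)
The plan is to prove this via a standard spectral sequence argument applied to a double complex, so the task is really to identify the right double complex and check that the two hypotheses are exactly what is needed to make both spectral sequences collapse favorably. First I would choose a projective resolution $P_\ast\to M$ of $M$ as a right $A$-module, and then form the first quadrant double complex $C_{p,q}=P_p\otimes_A Q_q$ with the obvious horizontal and vertical differentials induced from those of $P_\ast$ and $Q_\ast$. The two filtrations by rows and columns give two spectral sequences, both converging to the total homology $H_\ast(\mathrm{Tot}(C))$, and the strategy is to compute each page so that one spectral sequence identifies the target with $\Tor^A_\ast(M,N)$ and the other identifies it with a complex whose higher homology vanishes.

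For the first spectral sequence, I would take vertical homology first. Since each $P_p$ is projective, hence flat, tensoring with $P_p$ is exact, so the $E^1$ page is $E^1_{p,q}=P_p\otimes_A H_q(Q_\ast)$. Because $Q_\ast\to N$ is a resolution, this is $P_p\otimes_A N$ when $q=0$ and zero otherwise. Taking horizontal homology then gives $E^2_{p,0}=\Tor^A_p(M,N)$, and the spectral sequence collapses on the $q=0$ line. So the total homology of the double complex computes precisely $\Tor^A_\ast(M,N)$.

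For the second spectral sequence, I would take horizontal homology first. This yields $E^1_{p,q}=\Tor^A_p(M,Q_q)$. The first hypothesis is exactly what is needed to kill everything above the $p=0$ row, leaving only $E^1_{0,q}=M\otimes_A Q_q$. Taking vertical homology on what remains gives $E^2_{0,q}=H_q(M\otimes_A Q_\ast)$, and the second hypothesis says this complex is a resolution of $M\otimes_A N$, so $E^2_{0,q}=0$ for $q>0$ and $E^2_{0,0}=M\otimes_A N$. Thus this spectral sequence also collapses, and its total homology is concentrated in degree zero. Comparing with the first spectral sequence yields $\Tor^A_p(M,N)=0$ for $p>0$, as desired.

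I do not expect any serious obstacle here: the argument is a routine double complex / balancing argument, and the two hypotheses are custom-tailored to precisely the two collapse conditions needed. The only subtlety worth flagging is boundedness/convergence, which is automatic since the double complex lies in the first quadrant, so both filtrations are bounded in each total degree and the two spectral sequences both converge strongly to the same total homology.
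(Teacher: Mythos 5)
Your proof is correct. It is the standard hyperhomology argument: form the first-quadrant double complex $P_\ast\otimes_A Q_\ast$, and use the two filtration spectral sequences, where the two hypotheses are precisely the collapse conditions for the column-first filtration. The paper under review cites this as Theorem 3.1 of \cite{bhp} and does not reprove it; the proof given there is essentially the same double-complex/balancing argument you describe. Your remarks about flatness of the $P_p$ (needed to commute $P_p\otimes_A-$ past $H_q$) and about first-quadrant convergence are exactly the two points that need flagging, and you flagged both.
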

For the rest of this section, we will use the technique of inductive resolution to prove $\Tor$ of certain modules vanishes for positive degrees.

\subsection{Inductive Resolution for Rook-Brauer Algebras} \label{section.for.Rook.Brauer}
\begin{definition} \label{def.of.J_X}
    Suppose that $X$ is a subset of the set $\{1,\dots,n\}$.
    Define $J_X$ to be the left-ideal in $\RBr_n$ that is the $R$-span of all diagrams in which, among the nodes on the right labeled by elements of $X$, there is at least one singleton or a right-to-right connection. For $m\le n$, let $J_{\{n-m+1,\dots,n\}}$ be denoted by $J_m$.
\end{definition}
Notice that $J_n \subseteq \RBr_n$ is spanned by all non-permutation diagrams of $\RBr_n$ so $\RBr_n/J_n \cong R\mathfrak S_n$. We will use inductive resolution to establish the following theorem, which will play a key role in proving the analogues of Shapiro's Lemma.
\begin{theorem}\label{vanishing.RBr}
    Let $X\subseteq\{1,\dots,n\}$ and suppose $\epsilon \in R$ is invertible and $\delta \in R$, then the groups $\Tor_\ast^{\RBr_n}(\t,\RBr_n/J_X)$ vanish in positive degrees.
\end{theorem}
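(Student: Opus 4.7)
My plan would be to argue by induction on $|X|$. The base case $|X|=0$ is immediate since $\RBr_n/J_\emptyset = \RBr_n$ is free over itself. For the inductive step, pick $i\in X$, set $X' = X\setminus\{i\}$, and consider the short exact sequence
\[
0 \longrightarrow J_X/J_{X'} \longrightarrow \RBr_n/J_{X'} \longrightarrow \RBr_n/J_X \longrightarrow 0.
\]
The inductive hypothesis provides $\Tor^{\RBr_n}_\ast(\t,\RBr_n/J_{X'}) = 0$ in positive degrees, so the long exact sequence of $\Tor$ reduces the theorem to the stronger statement that $\Tor^{\RBr_n}_\ast(\t,J_X/J_{X'})$ vanishes in \emph{every} non-negative degree; the degree-$0$ vanishing automatically supplies the $\Tor_1$-injectivity needed at the left end of the sequence.

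I would then decompose
\[
J_X/J_{X'} \;=\; J^{\mathrm{sing}}_i \;\oplus\; \bigoplus_{j\in[n]\setminus X} J^{\mathrm{pair}}_{i,j}
\]
as left $\RBr_n$-modules, where $J^{\mathrm{sing}}_i$ is spanned by basis diagrams whose right node $R_i$ is isolated and $J^{\mathrm{pair}}_{i,j}$ is spanned by diagrams containing the right-right block $\{R_i,R_j\}$. Each summand is closed under left multiplication because the properties ``$R_i$ isolated'' and ``$R_i$-$R_j$ right-right'' both persist when we multiply on the left.

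The main tool is the idempotent produced by $T_i$. Since $T_i^2 = \epsilon T_i$ and $\epsilon$ is invertible, $e = T_i/\epsilon$ is idempotent, and right multiplication by $e$ is a left $\RBr_n$-module map giving the splitting $\RBr_n = \RBr_n e \oplus \RBr_n(1-e)$. A short check shows that $J_{X'}e \subseteq J_{X'}$ (right multiplication by $T_i$ only affects row $i\notin X'$), so the decomposition descends to $\RBr_n/J_{X'} = \RBr_n e/J_{X'}e \,\oplus\, \RBr_n(1-e)/J_{X'}(1-e)$, with $\RBr_n e/J_{X'}e = J^{\mathrm{sing}}_i$ by inspection. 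Thus $J^{\mathrm{sing}}_i$ is a direct summand of $\RBr_n/J_{X'}$, so the inductive hypothesis kills its positive-degree $\Tor$; and because $T_i$ is a non-permutation diagram, it acts as zero on $\t$ from the right, so $\t\cdot e = 0$, which forces $\t\otimes_{\RBr_n}\RBr_n e = 0$ and therefore $\t\otimes_{\RBr_n}J^{\mathrm{sing}}_i = 0$.

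For each $J^{\mathrm{pair}}_{i,j}$ I would run the same machinery with the commuting idempotent $e' = T_i T_j/\epsilon^2$: the resulting direct summand $\RBr_n e'/J_{X'}e'$ of $\RBr_n/J_{X'}$ is spanned by diagrams in which both $R_i$ and $R_j$ are isolated. The main obstacle is then to construct a left $\RBr_n$-module isomorphism $\phi\colon\RBr_n e'/J_{X'}e' \xrightarrow{\cong} J^{\mathrm{pair}}_{i,j}$ that ``adds the right-right bond $\{R_i,R_j\}$'' to each basis diagram. The crucial diagrammatic observation is that whenever a diagram has $R_i,R_j$ either both isolated or paired only with each other, no chain created by left multiplication can reach these two nodes, so inserting or removing the bond $\{R_i,R_j\}$ affects neither the resulting diagram nor the count of middle loops and contractible components; this makes $\phi$ $\RBr_n$-equivariant. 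Combining $\phi$ with the $\Tor$- and $\t\otimes$-vanishing of $\RBr_n e'/J_{X'}e'$ yields $\Tor^{\RBr_n}_\ast(\t, J^{\mathrm{pair}}_{i,j}) = 0$ for all $\ast\geq 0$, and assembling the two cases produces $\Tor^{\RBr_n}_\ast(\t, J_X/J_{X'}) = 0$ in every non-negative degree, closing the induction through the long exact sequence.
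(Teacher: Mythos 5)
Your global strategy is sound and closely parallels the paper's: induct on $|X|$, pass to the short exact sequence
\[
0 \longrightarrow J_X/J_{X'} \longrightarrow \RBr_n/J_{X'} \longrightarrow \RBr_n/J_X \longrightarrow 0
\]
with $X'=X\setminus\{i\}$, prove that $\Tor^{\RBr_n}_\ast(\t, J_X/J_{X'})$ vanishes in all degrees, and close the induction via the long exact sequence. (The paper phrases this via the ``inductive resolution'' Theorem, but the content is the same.) Your treatment of the singleton piece $J^{\mathrm{sing}}_i$ via the idempotent $T_i/\epsilon$ is exactly the paper's argument for $\calA_{X,i}$.

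The gap is in how you index and treat the pair pieces. With the paper's definition of $J_X$, a right-to-right connection only forces a diagram into $J_X$ when \emph{both} of its endpoints lie in $X$ (this is visible from the definition of $B_{X,x}$, from Lemma~\ref{lemma.for.BX.x}, and from the proof of Proposition~\ref{thm.for.inductive}). Consequently the kernel $J_X/J_{X'}$ decomposes with $j$ ranging over $X'=X\setminus\{i\}$, \emph{not} over $[n]\setminus X$: your summands $J^{\mathrm{pair}}_{i,j}$ with $j\notin X$ do not lie in $J_X$ at all, and the summands with $j\in X'$, which do occur, are missing from your decomposition. Once you repair the index set, the idempotent $e'=T_iT_j/\epsilon^2$ no longer does what you want relative to $\RBr_n/J_{X'}$: the image of $e'$ is spanned by diagrams in which $R_j$ is isolated, and since $j\in X'$ every such diagram already lies in $J_{X'}$, so $\RBr_n e'/(J_{X'}\cap \RBr_n e')=0$ while $J^{\mathrm{pair}}_{i,j}\neq 0$. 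Thus your proposed isomorphism $\phi$ cannot exist. The paper's remedy is twofold: compare $J^{\mathrm{pair}}_{i,j}$ (which is $\calM_{X,\{i,j\}}$ in its notation) to the quotient $\RBr_n/J_{X\setminus\{i,j\}}$, where \emph{both} $i$ and $j$ are dropped — this is still permitted by strong induction since $|X\setminus\{i,j\}|<|X|$ — and use the idempotent $\epsilon^{-1}T_iV_{ij}$, which creates the bond $R_i$--$R_j$ directly rather than passing through the vanishing ``both isolated'' state. Your ``add the bond'' map $\phi$ is essentially the composite of $\epsilon^{-1}V_{ij}$ with your $e'$, and the equivariance observation you make is correct, but it has to be carried out against the quotient by $J_{X\setminus\{i,j\}}$, not by $J_{X'}$.
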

The proof of this result will occupy the rest of this section and we follow a similar outline as section 4 of \cite{bhp} with some modifications.  Below, we prove a short lemma that is necessary for Theorem \ref{thm.for.inductive}.
\begin{lemma}\label{tensor.RBr/J}
     Let $J$ be a left ideal of $\RBr_n$ that is included in $J_n$. Then\\
     \centerline{$\t \otimes_{\RBr_n} \RBr_n/J \cong \t.$}
    In particular,\\
    \centerline{$ \Tor^{\RBr_n}_0(\t,\RBr_n/J_X) \cong \t $}
    for all $X\subset \{1, \dots, n\}$.
\end{lemma}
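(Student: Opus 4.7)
The plan is to exploit right-exactness of $\t\otimes_{\RBr_n}-$ applied to the short exact sequence
\[
0\longrightarrow J \longrightarrow \RBr_n \longrightarrow \RBr_n/J \longrightarrow 0,
\]
which yields a presentation
\[
\t\otimes_{\RBr_n} J \longrightarrow \t\otimes_{\RBr_n}\RBr_n \longrightarrow \t\otimes_{\RBr_n}\RBr_n/J \longrightarrow 0.
\]
The middle term is canonically identified with $\t$ via the multiplication map $\mu\colon t\otimes a\mapsto t\cdot a$, so it suffices to show that the image of the left-hand map, which under $\mu$ equals $\{t\cdot a : t\in\t,\,a\in J\}$, is zero.

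For this I would use the defining property of $J_n$: by Definition~\ref{def.of.J_X}, $J_n = J_{\{1,\dots,n\}}$ is spanned over $R$ by those diagrams in which at least one node on the right is either an isolated singleton or part of a right-to-right connection. Any such diagram fails to be a permutation diagram, so by the definition of the trivial module $\t$ (Definition~\ref{trivial.module}) it acts as $0$ on $\t$. Since the hypothesis $J\subseteq J_n$ tells us every $a\in J$ is an $R$-linear combination of such non-permutation diagrams, $t\cdot a = 0$ for every $t\in\t$ and $a\in J$. Thus the image of $\t\otimes_{\RBr_n}J\to\t$ is trivial, and the first isomorphism $\t\otimes_{\RBr_n}\RBr_n/J\cong \t$ follows immediately.

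For the second statement, I would first verify that $J_X\subseteq J_n$ for every $X\subseteq\{1,\dots,n\}$: a diagram whose right nodes indexed by $X$ contain a singleton or a right-to-right connection is \emph{a fortiori} a non-permutation diagram (since permutation diagrams have every right node participating in a left-to-right edge), so it lies in $J_n$. Applying the first part with $J=J_X$ and recalling the standard identification $\Tor^{\RBr_n}_0(\t,M)=\t\otimes_{\RBr_n}M$ then gives $\Tor^{\RBr_n}_0(\t,\RBr_n/J_X)\cong\t$.

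There is no real obstacle here; this is essentially a bookkeeping argument. The only thing to be careful about is not to conflate $\t$ (the trivial right module, which has no canonical left action by non-permutation diagrams vanishing) with its role inside the tensor product, and to confirm the inclusion $J_X\subseteq J_n$ directly from the definitions rather than assuming it.
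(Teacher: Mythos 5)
Your argument is correct and is essentially the same as the paper's, which also reduces to the observation that elements of $J\subseteq J_n$ act as zero on $\t$; you have simply spelled out the right-exactness step and the inclusion $J_X\subseteq J_n$ that the paper leaves implicit. No gap.
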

\begin{proof}
    Since $J \subseteq J_n$, elements of $J$ acts as zero on $\t$ and this gives $\t \otimes_{\RBr_n} \RBr_n/J \cong \t.$
\end{proof}
We will prove Theorem \ref{thm.for.inductive} by induction on the cardinality of $|X|$ and to do that, we will resolve $\RBr_n/J_X$ in term of these special modules introduced below. 
\begin{definition} \label{def.of.Ax.Bx}
    Let $\{a,b\} \subseteq X \subseteq\{1,\dots,n\}$, and let $x\in X$, define three left $\RBr_n$-submodules of $\RBr_n$ as follows:
    \begin{itemize}
        \item
        $A_x$ is the span of all diagrams in which $x$ is a singleton.
        \item
        $B_{X,x}$ is the span of all diagrams in which $x$ is connected to some element of $X$.
        \item
        $M_{\{a,b\}}$ is the span of all diagrams in which nodes $a$ and $b$ are connected.
    \end{itemize}
    We also define the quotients:
    \begin{gather*}
        \calA_{X,x}=\frac{A_x}{A_x\cap J_{X-\{x\}}},
        \qquad
        \calB_{X,x} = \frac{B_{X,x}}{B_{X,x}\cap J_{X-\{x\}}},
        \qquad
        \calM_{X,\{a,b\}}=\frac{M_{\{a,b\}}}{M_{\{a,b\}} \cap J_{X-\{a,b\}}}
    \end{gather*}
\end{definition}
We prove some results about these special modules. 
\begin{lemma}\label{modules.lemma}
    Let $\epsilon \in R$ be invertible.  The modules $\calA_{X,x}$, $\calB_{X,x}$, and $\calM_{X,Y}$ behave as follows under tensor product with $\t$.
    \begin{itemize}
        \item
        Let $x\in X\subseteq\{1,\dots,n\}$.
        Then $\t\otimes_{\RBr_n}\calA_{X,x}=0$ and $\calA_{X,x}$ is a direct summand of $\RBr_n/J_{X-\{x\}}$.
        \item
        Let $x\in X\subseteq\{1,\dots,n\}$ with $n\geq 2$.
        Then $\t\otimes_{\RBr_n}\calB_{X,x}=0$.
        \item
         Let $a, b \in X$, then $\t\otimes_{\RBr_n}\calM_{X,\{a,b\}}=0$.  Furthermore, $\calM_{X,\{a,b\}}$ is a direct summand of $\RBr_n/J_{X-\{a,b\}}$.
    \end{itemize}
\end{lemma}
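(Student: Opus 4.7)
The unifying strategy is to present each of $A_x$ and $M_{\{a,b\}}$ as a principal left ideal $\RBr_n\cdot e$ in which $\epsilon^{-1}e$ is an idempotent that is not a permutation (so its augmentation vanishes). The general observation is: if $f\in\RBr_n$ satisfies $f^2=f$ and $\pi(f)=0$, then in $\t\otimes_{\RBr_n}\RBr_n f$ we have $1\otimes f=1\otimes f^2=\pi(f)\cdot(1\otimes f)=0$ and hence $1\otimes\beta f=\pi(\beta)(1\otimes f)=0$ for every $\beta\in\RBr_n$. Thus $\t\otimes_{\RBr_n}\RBr_n f=0$, and by right exactness of $\t\otimes_{\RBr_n}-$ this vanishing is inherited by every quotient of $\RBr_n f$. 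The invertibility of $\epsilon$ is what lets us rescale our candidate elements into genuine idempotents.

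For the first bullet I take $e=T_x$. A direct diagram computation shows that for any basis diagram $\alpha\in A_x$, the product $\alpha T_x$ creates an isolated middle node at position $x$ (both factors are singleton there), contributing exactly one factor $\epsilon$, while every other pair of $T_x$ passively transmits $\alpha$'s connections. Hence $\alpha T_x=\epsilon\alpha$, which yields both $A_x=\RBr_n T_x$ and the idempotency $(\epsilon^{-1}T_x)^2=\epsilon^{-1}T_x$. The general principle then gives $\t\otimes_{\RBr_n}A_x=0$, and consequently $\t\otimes_{\RBr_n}\calA_{X,x}=0$. For the direct summand claim I define $\phi\colon\RBr_n/J_{X-\{x\}}\to\calA_{X,x}$ by $[\alpha]\mapsto[\epsilon^{-1}\alpha T_x]$. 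Well-definedness uses that $x\notin X-\{x\}$, so right multiplication by $T_x$ transmits any right singleton or right-to-right connection within $X-\{x\}$ undisturbed and therefore preserves $J_{X-\{x\}}$. The identity $\alpha T_x=\epsilon\alpha$ on $A_x$ then forces $\phi$ to restrict to the identity on $\calA_{X,x}$, so $\phi$ is a retraction of the inclusion.

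For the third bullet I take $e=T_aV_{ab}$. The computation $\alpha\cdot T_aV_{ab}=\epsilon\alpha$ for $\alpha\in M_{\{a,b\}}$ proceeds in two stages: right-multiplying by $T_a$ first severs the $a$-$b$ right-to-right pair of $\alpha$, leaving $a$ and $b$ as singletons on the right while preserving every other connection and introducing no stray $\epsilon$ or $\delta$ factor; subsequent right-multiplication by $V_{ab}$ reinstates the pair $\{a,b\}$ through $V_{ab}$'s right-to-right edge, and the middle positions $M_a,M_b$ form a purely middle contractible block joined by $V_{ab}$'s left-to-left edge, contributing exactly one factor $\epsilon$. Hence $M_{\{a,b\}}=\RBr_n T_aV_{ab}$, with $\epsilon^{-1}T_aV_{ab}$ an idempotent of augmentation zero, and the general principle gives $\t\otimes_{\RBr_n}\calM_{X,\{a,b\}}=0$. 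The retraction $[\alpha]\mapsto[\epsilon^{-1}\alpha T_aV_{ab}]$ exhibits $\calM_{X,\{a,b\}}$ as a direct summand of $\RBr_n/J_{X-\{a,b\}}$; well-definedness again uses $a,b\notin X-\{a,b\}$.

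The second bullet reduces to the third. Each basis diagram of $B_{X,x}$ has a unique node $y\in X-\{x\}$ joined to $x$ by a right-to-right edge, which gives a direct-sum decomposition of left $\RBr_n$-modules $B_{X,x}=\bigoplus_{y\in X-\{x\}}M_{\{x,y\}}$; this decomposition is compatible with intersecting by the basis-spanned ideal $J_{X-\{x\}}$. Since $\t\otimes_{\RBr_n}M_{\{x,y\}}=0$ by the third bullet's argument, this vanishing transfers first to $B_{X,x}$ and then to the quotient $\calB_{X,x}$. The main obstacle I anticipate is verifying the two-stage diagram bookkeeping in the third bullet, in particular checking that no stray $\delta$-loop is ever produced and that exactly one $\epsilon$-factor emerges from the purely middle block; once that local computation is carried out by carefully tracking each block of the conjoined three-column diagram, every other assertion follows from $R$-linearity and the abstract principles above.
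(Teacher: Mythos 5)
Your proof is correct and rests on the same two idempotents $\epsilon^{-1}T_x$ and $\epsilon^{-1}T_aV_{ab}$ that the paper uses, so the approach is essentially the same, but your organization of the vanishing arguments is cleaner and worth noting. For $\t\otimes_{\RBr_n}A_x=0$, the paper argues via a parity observation (since $x$ is a right singleton, there is a second singleton $x'$, and one factors $\alpha=T_{x'}\alpha'$ or $\alpha=\alpha'T_x$), whereas you note directly that $A_x=\RBr_n T_x$ with $\epsilon^{-1}T_x$ an augmentation-zero idempotent, which makes the tensor vanish immediately; this sidesteps the parity argument and, as written, the paper's case ``$x'$ on the left'' actually needs a small repair (the element $\alpha'$ defined there does not lie in $A_x$, so one should instead write $\alpha=\epsilon^{-1}T_{x'}\alpha$), so your route is not only shorter but also avoids that pitfall. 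For $\t\otimes_{\RBr_n}B_{X,x}=0$, the paper runs a two-case analysis (existence of a left-to-left connection or a pair of left singletons, then factors through $V_{xy}$), while you instead observe the direct-sum decomposition $B_{X,x}=\bigoplus_{y\in X\setminus\{x\}}M_{\{x,y\}}$ of left modules and reduce to the third bullet; this is exactly the decomposition the paper establishes separately (at the level of the quotients $\calM$, $\calB$) in Lemma \ref{lemma.for.BX.x}, so there is no circularity and your reduction is legitimate. The retraction arguments for the direct-summand claims are identical to the paper's. One small notational point: when you write $\pi(f)$ and $\pi(\beta)$ in your general observation you are really invoking the augmentation $\RBr_n\to R$ (equivalently the action on $1\in\t$), not the projection $\pi\colon\RBr_n\to R\fS_n$ defined in the paper; the argument is fine once this is read correctly.
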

\begin{proof}
    Since $\calA_{X,x}, \calB_{X,x}, \calM_{X,\{a,b\}}$ are quotients of $A_x, B_{X,x}$ and $M_{\{a,b\}}$ respectively, we show that these modules vanish when tensoring  with $\t$.\\
    \indent To show $\t \otimes_{\RBr_n} A_x = 0$, let $\alpha$ be a diagram in $A_x$  which means node $x$ is a singleton.  This means $\alpha$ also has another singleton on the left or right side, say $x'$.  If $x'$ is on the left, then $\alpha = T_{x'}\alpha'$ and if $x'$ is on the right, then $\alpha = \alpha' T_x$ where in both cases, $\alpha'$ is the diagram obtained from $\alpha$ by connecting $x$ and $x'$ together while leaving all other connections unchanged.  Then, in both cases, \\
    \centerline{$1 \otimes \alpha = 1 \otimes T_{x'}\alpha' = 1\cdot T_{x'} \otimes \alpha' = 0$ \hspace{0.2in}and \hspace{0.2in}$1 \otimes \alpha = 1 \otimes \alpha' T_{x} = 1\cdot \alpha' \otimes T_{x} = 0$}
    because $T_{x'}$ and $\alpha'$ in the second case are both non-permutation diagrams and hence, acts as 0 on $\t$.\\  For the second part, we prove that $\calA_{X,x}$ is a direct summand of $\RBr_n/J_{X-\{x\}}$.  Since right-multiplying by $\epsilon^{-1}T_x$ takes $J_{X-\{x\}}$ to itself, this induces the map $\RBr_n/J_{X-\{x\}} \xrightarrow[]{\epsilon^{-1}T_x} \calA_{X,x}$.  This map is surjective because for any diagram $\alpha \in \calA_{X,x}$, we have $\alpha T_x = \epsilon \alpha$; it also splits because $\epsilon^{-1} T_x$ is idempotent with splitting map $\calA_{X,x} \to \RBr_n/J_{X-\{x\}}$ induced by the inclusion $A_x \hookrightarrow \RBr_n$. \\
    \indent To show $\t \otimes_{\RBr_n} B_{X,x} = 0$, let $\alpha \in B_{X,x}$ be a diagram, then the node $x$ is connected to some node $y$ of $X$.  Since each node is connected to at most one other node in any Rook-Brauer diagram, this implies that there is either a left-to-left connection or a pair of isolated nodes on the left. \\
    \indent Case 1:  If there is a left-to-left connection from $x'$ to $y'$, choose $\alpha'$ to be the diagram obtained from $\alpha$ by removing the connection from $x$ to $y$.  Notice that $\alpha = \epsilon^{-1} \alpha' V_{xy}$.\\
    \indent Case 2: If there is a pair of isolated nodes $\{x',y'\}$ on the left, choose $\alpha'$ to be the diagram obtained from $\alpha$ by removing the connection from $x$ to $y$, then either connect $x'$ with $x$ or $y'$ with $y$ but not both.  Notice that $\alpha = \alpha' V_{xy}$.   \\
    In both case, $\alpha'$ is a non-permutation diagram which gives $\t \otimes \alpha = 0$.\\
    \indent Finally, we conclude that $ \t \otimes M_{\{a,b\}} = 0$  by applying the second part above.  To see that $\calM_{X,\{a,b\}}$ is a direct summand of $\RBr_n/J_{X-\{a,b\}}$, note that right-multiplication by $T_a V_{ab}$ takes $J_{X-\{a,b\}}$ into itself so this induces the map \\
    \centerline{$\RBr_n/J_{X-\{a,b\}} \xrightarrow[]{\epsilon^{-1}T_a V_{ab}} \calM_{X,\{a,b\}}$,}
    we will show that this map is surjective and splits.  To see that the map is surjective, pick any diagram $\alpha \in \calM_{X,\{a,b\}}$, we then have $\alpha T_a V_{ab} = \epsilon \alpha$ which shows that the map is surjective. \\
    This map also splits because $\epsilon^{-1}T_a V_{ab}$ is idempotent with the splitting map $\iota:\calM_{X,\{a,b\}} \to \RBr_n/J_{X-\{a,b\}}$ induced by the inclusion $M_{\{a,b\}} \hookrightarrow \RBr_n$.
\end{proof}
One can readily verify that $\calB_{X,x}$ decomposes as a direct sum of $\calM_{X,\{a,b\}}$'s as follows. 
\begin{lemma} \label{lemma.for.BX.x}
    For any $\delta, \epsilon \in R$, there exists a left $\RBr_n$-modules isomorphism $\underset{x_o \in X}{\bigoplus} \calM_{X,\{x,x_o\}} \cong \calB_{X,x}$.
\end{lemma}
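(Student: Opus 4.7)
The plan is to build the isomorphism diagrammatically, exploiting the fact that in any Rook-Brauer basis diagram each node is attached to at most one other. First, as a left $\RBr_n$-module,
\[
B_{X,x} \;=\; \bigoplus_{x_o \in X - \{x\}} M_{\{x,x_o\}}.
\]
Every basis diagram of $B_{X,x}$ has the node $x$ joined to a uniquely determined partner $x_o \in X-\{x\}$, which gives the decomposition at the level of $R$-modules. Each summand $M_{\{x,x_o\}}$ is a left $\RBr_n$-submodule because multiplying on the left by any diagram $\beta$ only modifies the left column and the middle contractions, leaving the right-to-right edge $\{x,x_o\}$ intact (up to scalar factors of $\delta$ and $\epsilon$ coming from loops and contractible components), so the right-partner of $x$ is unchanged.

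Second, I need to check that this decomposition descends to the quotient defining $\calB_{X,x}$. The key identification is
\[
M_{\{x,x_o\}} \cap J_{X-\{x\}} \;=\; M_{\{x,x_o\}} \cap J_{X-\{x,x_o\}}.
\]
The inclusion $\supseteq$ follows from $J_{X-\{x,x_o\}} \subseteq J_{X-\{x\}}$. For $\subseteq$, let $\alpha \in M_{\{x,x_o\}} \cap J_{X-\{x\}}$; some node in $X-\{x\}$ is then either a singleton of $\alpha$ or participates in a right-to-right edge with another node in $X-\{x\}$. But $x_o$ itself is already joined to $x \notin X-\{x\}$, so $x_o$ is neither a singleton nor paired with another node of $X-\{x\}$. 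Hence the witness must lie entirely in $X-\{x,x_o\}$, placing $\alpha$ in $J_{X-\{x,x_o\}}$.

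Third, since $J_{X-\{x\}}$ is $R$-spanned by basis diagrams and each basis diagram of $B_{X,x}$ lies in a unique $M_{\{x,x_o\}}$, we have
\[
B_{X,x} \cap J_{X-\{x\}} \;=\; \bigoplus_{x_o \in X - \{x\}} \bigl(M_{\{x,x_o\}} \cap J_{X-\{x\}}\bigr),
\]
so quotienting the direct sum summand by summand and applying the ideal equality above yields
\[
\calB_{X,x} \;\cong\; \bigoplus_{x_o \in X - \{x\}} \frac{M_{\{x,x_o\}}}{M_{\{x,x_o\}} \cap J_{X-\{x,x_o\}}} \;=\; \bigoplus_{x_o \in X - \{x\}} \calM_{X,\{x,x_o\}},
\]
which is the asserted isomorphism of left $\RBr_n$-modules.

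No step is genuinely hard; the only subtlety worth flagging is the ideal equality in the second step, which boils down to the observation that the distinguished edge $\{x,x_o\}$ \emph{uses up} the node $x_o$ and thereby prevents $x_o$ from serving as a witness for membership in $J_{X-\{x\}}$. Everything else is a matter of reading off the diagrammatic decomposition and checking that intersections with $J_{X-\{x\}}$ respect it, which is automatic because $J_{X-\{x\}}$ is spanned by basis diagrams.
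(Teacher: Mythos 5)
Your proof is correct and rests on the same two observations as the paper's: every basis diagram of $B_{X,x}$ has $x$ joined to a unique $x_o$, and any diagram in $M_{\{x,x_o\}}$ lying in $J_{X-\{x\}}$ must already lie in $J_{X-\{x,x_o\}}$ because the edge $\{x,x_o\}$ occupies $x_o$. You package these as a direct-sum decomposition of $B_{X,x}$ and of $B_{X,x}\cap J_{X-\{x\}}$ followed by summand-wise quotienting, whereas the paper constructs the map from the direct sum via inclusions and verifies surjectivity and injectivity directly; the content is the same (and your indexing over $X-\{x\}$ rather than $X$ is the precise reading, since $M_{\{x,x\}}$ is not a meaningful module).
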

\begin{proof}
    The isomorphism is given by $\iota:\underset{x_o \in X}{\bigoplus} \calM_{X,\{x,x_o\}} \to \calB_{X,x}$ where, on each summand, $\iota$ is the map $ \calM_{X,\{x,x_o\}} \to \calB_{X,x}$ induced by the inclusions \\
    \centerline{$M_{\{x,x_o\}} \hookrightarrow \calB_{X,x}$ and $J_{X-\{x,x_o\}} \hookrightarrow J_{X-\{x\}}$.}
    To see that $\iota$ is surjective, observe that for any diagram $\alpha \in \calB_{X,x}$, the node $x$ must be connected to some node $x_o \in X$. This means $\alpha$ lies in the image of the direct summand $\calM_{X,\{x,x_o\}}$ indexed by $x_o \in X$, which shows that $\iota$ is indeed surjective.\\
    Note that images of diagrams from different direct summand are distinct because for $x_o \neq x_o'$, diagrams in $M_{\{x,x_o\}}$ has node $x$ connected with $x_o$ while diagrams in $M_{\{x,x_o'\}}$ has node $x$ connected with $x_o'$.  Hence, to show that $\iota$ is injective, we can show for each direct summand,  $\calM_{X,\{x,x_o\}} \to \calB_{X,x}$ is injective.  Pick a diagram $\alpha$ in the kernel of this map i.e $\alpha = 0$ in $\calB_{X,x}$.  This implies $\alpha$ lies in $J_{X-\{x\}}$ which means, two nodes of $\alpha$ labeled by elements of $X-\{x\}$ must be connected or a node labeled by $X-\{x\}$ is isolated.  \\
    Since $\alpha$ lies in $M_{\{x,x_o\}}$, $x_o$ is not  isolated and is connected with $x$.  Hence, $\alpha$ must have a right-to-right connection between two nodes labeled by $X-\{x,x_o\}$ or an isolated node among nodes labeled by $X-\{x,x_o\}$.  But this implies $\alpha \in J_{X-\{x,x_o\}}$ so this map is injective, hence $\iota$ is also injective. 
\end{proof}

We now resolve $\RBr_n/J_X$ in terms of $\calA_{X,x}$ and $\calB_{X,x}$.  The proof of the following proposition is almost identical, with small modifications, to the proof of Proposition 4.6 presented in \cite{bhp}.

\begin{proposition} \label{thm.for.inductive}
     Let $X\subseteq\{1,\dots,n\}$, let $x\in X$, and assume $n\geq 2$.
    The following sequence, in which all maps are induced by either an inclusion or an identity map, is a resolution of $\RBr_n/J_X$.
    \[\xymatrix@R=5pt{
        \dots
        \ar[r]
        &
        0
        \ar[r]
        &
        \calA_{X,x}
        \oplus
        \calB_{X,x}
        \ar[r]
        &
        \RBr_n/J_{X-\{x\}} 
        \ar[r]
        &
        \RBr_n/J_X
        \\
        &
        2
        &
        1
        &
        0
        &
        -1
    }\]
    Moreover, applying $\t\otimes_{\RBr_n}\!\!-$ to the sequence gives a resolution of $\t\otimes_{\RBr_n} \RBr_n/J_X$.
\end{proposition}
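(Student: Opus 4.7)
The plan is to explicitly identify the kernel of the augmentation $d_0\colon \RBr_n/J_{X-\{x\}} \twoheadrightarrow \RBr_n/J_X$ with the image of $\calA_{X,x}\oplus\calB_{X,x}$ under the natural inclusion maps, and then invoke Lemma \ref{modules.lemma} and Lemma \ref{tensor.RBr/J} to handle the tensored sequence. First I would observe that $J_{X-\{x\}}\subseteq J_X$, since any defect (a singleton or a right-to-right edge) among nodes in $X-\{x\}$ is \emph{a fortiori} a defect among nodes in $X$; consequently $d_0$ is a well-defined surjection with kernel $J_X/J_{X-\{x\}}$.

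The combinatorial heart of the argument is to show that $J_X/J_{X-\{x\}}\cong \calA_{X,x}\oplus\calB_{X,x}$ as left $\RBr_n$-modules. I would classify each basis diagram $\alpha\in J_X$ by the behavior of the right-hand node $x$: either (i) $x$ is a singleton, in which case $\alpha\in A_x$; (ii) $x$ is involved in a right-to-right edge with some $x_0\in X-\{x\}$, in which case $\alpha\in B_{X,x}$; or (iii) $x$ is connected to a left-hand node or to a right-hand node outside $X$, in which case the defect certifying $\alpha\in J_X$ must be contributed by some node in $X-\{x\}$, so $\alpha\in J_{X-\{x\}}$. This yields the sum decomposition $J_X=A_x+B_{X,x}+J_{X-\{x\}}$, so the map $\calA_{X,x}\oplus\calB_{X,x}\to \RBr_n/J_{X-\{x\}}$ surjects onto $\ker d_0$. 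For injectivity, the basis diagrams spanning $A_x$ and those spanning $B_{X,x}$ are disjoint (node $x$ cannot be simultaneously isolated and connected). Given a relation $a+b\in J_{X-\{x\}}$ with $a\in A_x$ and $b\in B_{X,x}$, I would expand $a+b$ in the diagram basis of $J_{X-\{x\}}$ and sort by the state of $x$; the three classes (i), (ii), (iii) cannot interfere, forcing $a\in A_x\cap J_{X-\{x\}}$ and $b\in B_{X,x}\cap J_{X-\{x\}}$ separately, so that $a=0$ in $\calA_{X,x}$ and $b=0$ in $\calB_{X,x}$.

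Finally, for the tensored sequence, Lemma \ref{modules.lemma} (using $n\geq 2$ for the $\calB_{X,x}$ factor) annihilates the leftmost term, while Lemma \ref{tensor.RBr/J} canonically identifies both $\t\otimes_{\RBr_n}\RBr_n/J_{X-\{x\}}$ and $\t\otimes_{\RBr_n}\RBr_n/J_X$ with $\t$. The induced map between them sends $1\otimes[\mathrm{id}]\mapsto 1\otimes[\mathrm{id}]$, which is the identity of $\t$ under these identifications, hence an isomorphism. I expect the main obstacle to be the combinatorial case analysis of the second paragraph; once the partition of basis diagrams by the role of $x$ is carefully set up, exactness of the original sequence and exactness of the tensored sequence both fall out by direct inspection together with the vanishing lemmas.
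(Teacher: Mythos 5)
Your proposal is correct and follows essentially the same route as the paper: classify basis diagrams of $J_X$ by the state of node $x$ to identify the kernel of the quotient map, use disjointness of the diagram bases of $A_x$ and $B_{X,x}$ to get injectivity, and apply Lemmas \ref{modules.lemma} and \ref{tensor.RBr/J} to reduce the tensored sequence to $0\to 0\to\t\xrightarrow{\mathrm{Id}}\t$.
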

\begin{proof}
    Since all maps are induced by either inclusion or identity, all maps are well-defined because $J_{X-\{x\}} \subseteq J_X$, $(A_x \cap J_{X-\{x\}}) \subseteq J_{X-\{x\}}$ and $(B_{X,x} \cap J_{X-\{x\}}) \subseteq J_{X-\{x\}}$. \\
    The surjectivity of the map $ \RBr_n/J_{X-\{x\}} \to  \RBr_n/J_{X}$ is clear so the complex is exact at degree $-1$.  To see the complex is exact at degree $0$, we observe that the kernel of the map $ \RBr_n/J_{X-\{x\}} \to  \RBr_n/J_{X}$ is spanned by diagrams in $\RBr_n$ that lies in $J_X$.  For a diagram $\alpha \in \RBr_n$ to be in  $J_X$, there must be a right-to-right connection or an isolated node among nodes labeled by $X$.  For it to \emph{also} not be in $J_{X-\{x\}}$, among nodes labeled by $X$, it must have exactly one right-to-right connection between $x$ and another node in $X$ or $x$ must be the only isolated node.  If $\alpha$ has a right-to-right connection between $x$ and another node in $X$, then $\alpha$ is in the image of the inclusion $B_{X,x} \hookrightarrow \RBr_n$.  If $x$ is an isolated node in $\alpha$, then $\alpha$ is in the image of the inclusion $A_x \hookrightarrow \RBr_n$.  Hence, the complex is exact at degree $0$.\\
    To show exactness at degree $1$ i.e the map $\calA_{X,x} \oplus \calB_{X,x} \to  \RBr_n/J_{X-\{x\}}$ is injective, note that if $\alpha \in A_x$ and $\beta \in B_{X,x}$ such that $\alpha + \beta \in J_{X-\{x\}}$, then $\alpha, \beta \in J_{X-\{x\}}$ because $A_x$ and $B_{X,x}$ have no basis elements in common.  Hence, the complex is exact at degree $1$ and it is a resolution of $\RBr_n/J_X$.\\
    To prove the second claim, after applying $\t\otimes_{\RBr_n}\!\!-$ and by Lemmas \ref{modules.lemma} and \ref{tensor.RBr/J}, the resolution becomes 
    \[\xymatrix@R=5pt{
        \dots
        \ar[r]
        &
        0
        \ar[r]
        &
        0
        \ar[r]
        &
        \t
        \ar[r]^{\mathrm{Id}}
        &
        \t
        \\
        &
        2
        &
        1
        &
        0
        &
        -1
    }\]
    and the claim now follows.
\end{proof}

We use this proposition to prove the vanishing of Tor's group of $\RBr_n/J_X$, Theorem \ref{vanishing.RBr}.  The proof below follows a similar outline with some modifications to that of the analogous statement for Partition algebras. (see section 4.4 of \cite{bhp}).\\

\noindent {\bf Proof of Theorem \ref{vanishing.RBr}}
    For the case when $n = 0$, we have $\RBr_n = R$ and the result follows immediately.  For the case when $n = 1$, we either have $X = \emptyset$ or $X = \{1\}$.  If $X = \emptyset$, then $J_X = 0$ so that $\RBr_n/J_X = \RBr_n$ and the result follows.  If $X = \{1\}$, then $J_X$ is spanned by $T_1$ which implies $J_X = A_x$ and we have a SES   \\
    \[\xymatrix@R=5pt{
        0
        \ar[r]
        &
        \calA_{X,x}
        \ar[r]
        &
        \RBr_n
        \ar[r]
        &
        \RBr_n/J_X
        \ar[r]
        &
        0}. \]

    Since $\epsilon$ is invertible, $\calA_{X,x} = A_x$ and $\RBr_n/J_X$ are direct summand of $\RBr_n$ and the result now follows.  \\
    Assume that $n \geq 2$ and $X \subseteq [n]$.  We prove the theorem by using strong induction on cardinality of $X$.  From above, the result is clear when $X = \emptyset$.  Assume $|X| > 0$ and the result holds for any subset $X' \subseteq [n]$ of smaller cardinality. \\
    By Proposition \ref{thm.for.inductive} and Theorem \ref{inductive-resolution-method}, it suffices to show that the three modules $\calA_{X,x}, \calB_{X,x}$ and $\RBr_n/J_{X-\{x\}}$ all vanish under $\Tor^{\RBr_n}_i(\t,-)$ for $i>0$. \\ But this is immediate since $\Tor^{\RBr_n}_i(\t,\RBr_n/J_{X-\{x\}}) = 0$ for $i > 0 $ because of the induction hypothesis and $\Tor^{\RBr_n}_i(\t,\calA_{X,x}) = 0$ for $i > 0 $ because $\calA_{X,x}$ is a direct summand of $\RBr_n/J_{X-\{x\}}$ by Lemma \ref{modules.lemma}, which vanishes under $\Tor^{\RBr_n}_i(\t,-)$ so $\calA_{X,x}$ does as well. \\
    By Lemma \ref{lemma.for.BX.x}, $\calB_{X,x}$ is a direct summand of $\calM_{X,\{x,x_o\}}$'s for $x_o \in X$ and Lemma \ref{modules.lemma} implies $\calM_{X,\{x,x_o\}}$ is also a direct summand of $\RBr_n/J_{X-\{x,x_o\}}$ which vanishes under $\Tor^{\RBr_n}_i(\t,-)$ because of the induction hypothesis.  This, in turn, implies  $\calM_{X,\{x,x_o\}}$ also vanishes under $\Tor^{\RBr_n}_i(\t,-)$ and so does $\calB_{X,x}$.
    \qed

\subsection{Inductive Resolution for Motzkin Algebras} This subsection is similar to the subsection \ref{section.for.Rook.Brauer} above and we will prove the analogue of Theorem \ref{vanishing.RBr} for the Motzkin algebras.  \\
\indent By replacing $\RBr_n$ with $\M_n$ in Definition \ref{def.of.J_X} and \ref{def.of.Ax.Bx}, we obtain similar left $\M_n$-submodules of $\M_n$, namely $J_X$, $A_x$, $B_{X,x}$ and the corresponding quotients $\calA_{X,x}$, $\calB_{X,x}$.  We also introduce a new left $\M_n$-submodule tailored specifically to the structure of Motzkin algebras. \\

\vspace*{-0.2in}
\noindent {\bf Notational Remark:} For $a,b \in [n]$ with $a < b$, define $[a,b] = \{i \in [n]~|~a \leq i \leq b\}$. 
\begin{definition}
    Let $a,b \in [n]$ with $a < b$, if $P$ is a partition of $[a,b]$ into subsets of size at most 2, define $Y_P$ to be the right-link state of a Motzkin $n$-diagram in which all right-to-right connections and isolated nodes are in one-to-one correspondence to pairs and singletons, respectively, in $P$ (labeled by pairs and singletons in $P$) while all other nodes have defects.  \\If any of the above connections violate planarity, there is no right link state for that particular $P$.
\end{definition}
For example, if $n = 6, ~a = 1,~ b = 5$ and $P = \{\{1,5\},\{2,3\},\{4\}\}$, then $Y_P$ is given below. 
\begin{figure}[h!]
\centering
\begin{tikzpicture}
\begin{comment}
\fill (0,0) circle[radius=2pt];
\fill (0,1) circle[radius=2pt];
\fill (0,2) circle[radius=2pt];
\fill (0,3) circle[radius=2pt];
\fill (1,0) circle[radius=2pt];
\fill (1,1) circle[radius=2pt];
\fill (1,2) circle[radius=2pt];
\fill (1,3) circle[radius=2pt];  
\end{comment}
\fill (4,0) circle[radius=2pt];
\fill (4,1) circle[radius=2pt];
\fill (4,2) circle[radius=2pt];
\fill (4,3) circle[radius=2pt]; 
\fill (4,4) circle[radius=2pt]; 
\fill (4,5) circle[radius=2pt]; 

\begin{comment}
\draw (0,0) node[left] {$-1$};
\draw (0,1) node[left] {$-2$};
\draw (0,2) node[left] {$-3$};
\draw (0,3) node[left] {$-4$};
\draw (1,0) node[right] {$1$};
\draw (1,1) node[right] {$2$};
\draw (1,2) node[right] {$3$};
\draw (1,3) node[right] {$4$};
\end{comment}

\draw (4,0) node[right] {$1$};
\draw (4,1) node[right] {$2$};
\draw (4,2) node[right] {$3$};
\draw (4,3) node[right] {$4$};
\draw (4,4) node[right] {$5$}; 
\draw (4,5) node[right] {$6$}; 
\node at (2,2.5) {$Y_P = $};

%\draw (0,0) -- (1,1);
%\draw (0,3) -- (1,2);
%\path[-] (0,3) edge [bend left=20] (0,1);
%\path[-] (1,2) edge [bend left=20] (1,3);
%\draw (4,0) -- (3,1);
\draw (4,5) -- (3,5);
\path[-] (4,0) edge [bend left=25] (4,4);
\path[-] (4,1) edge [bend left=22] (4,2);

\end{tikzpicture}
\caption{$Y_P$ when $n = 6, ~a = 1,~ b = 5$ and $P = \{\{1,5\},\{2,3\},\{4\}\}$ }
\end{figure}
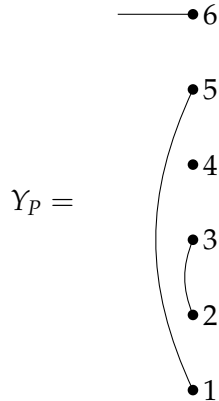

\begin{definition}
    Given a right-link state $Y_P$, define a left $\M_n$-submodule $\bY_P$ spanned by diagrams having right-link state obtained from $Y_P$ by a (possibly empty) sequence of splices and deletion (see Def   \ref{splices.deletion}).  If no right-link state exists for $P$, define $\bY_P$ to be the zero module. 
\end{definition}

\begin{definition}
    Given $a, b \in X \subseteq [n]$ with $a < b$, let $P$ be a partition of $[a,b]$ as above with $\{a,b\} \in P$ and the corresponding right-link state $Y_P$, define the quotient 
    $$\bY_{P,\{a,b\}} \coloneqq \dfrac{\bY_P}{\bY_P \cap J_{X-\{a,b\}}}$$
\end{definition}

We prove the analogue of Lemma \ref{modules.lemma} for $\bY_{P,\{x,y\}}$.
\begin{lemma}  \label{prop.Y_P}
    Let $x, y \in X$ and $P$ be a partition of $[x,y]$ if $x < y$ or $[y,x]$ if $y < x$ as above with $\{x,y\} \in P$, then $\t \otimes_{\M_n} \bY_{P,\{x,y\}} = 0$.  Furthermore, $\bY_{P,\{x,y\}}$ is a direct summand of $\dfrac{\M_n}{J_{X'}}$ where $X' \subseteq [n]$ with $|X'| < |X|$.
\end{lemma}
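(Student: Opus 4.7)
The plan is to follow the strategy of Lemma~\ref{modules.lemma} for $\calM_{X,\{a,b\}}$, constructing a planar analogue of the element $T_a V_{ab}$. Let $\tilde{e}_P \in \M_n$ be the Motzkin diagram with: a right-to-right connection $\{a,b\}$ for each pair $\{a,b\} \in P$, with $-a,-b$ left as isolated left nodes; isolated $c$ and $-c$ for each singleton $\{c\} \in P$; and horizontal connections $\{-i,i\}$ for every $i \notin [x,y]$ (resp.\ $[y,x]$). Planarity of $P$ (equivalent to the existence of $Y_P$) and the placement of horizontals entirely outside $[x,y]$ make $\tilde{e}_P$ a valid Motzkin diagram; its right link state is exactly $Y_P$, so $\tilde{e}_P \in \bY_P$, and $\tilde{e}_P$ is non-permutation.

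A direct computation of the middle column shows that for every $\beta \in \bY_P$,
\[
\beta \cdot \tilde{e}_P = \epsilon^{k}\,\beta, \qquad k := |P|,
\]
since each pair $\{a,b\} \in P$ produces a contractible edge-component (the right-to-right $\{a,b\}$-edge of $\beta$ meeting the isolated $-a,-b$ of $\tilde{e}_P$) and each singleton $\{c\} \in P$ produces an isolated middle vertex, each contributing a single $\epsilon$; the horizontals of $\tilde{e}_P$ transmit $\beta$'s outside-$[x,y]$ structure unchanged. Setting $\beta = \tilde{e}_P$ gives $\tilde{e}_P^2 = \epsilon^k \tilde{e}_P$, so $\epsilon^{-k}\tilde{e}_P$ is idempotent. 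For the vanishing claim, every $\alpha \in \bY_P$ is non-permutation because its right link state retains the right-to-right connection $\{x,y\}$ inherited from $Y_P$; hence
\[
1 \otimes \alpha = \epsilon^{-k}\bigl(1 \otimes \alpha\tilde{e}_P\bigr) = \epsilon^{-k}\bigl((1\cdot\alpha)\otimes \tilde{e}_P\bigr) = 0,
\]
yielding $\t \otimes_{\M_n} \bY_P = 0$ and, by right-exactness of the tensor product, $\t \otimes_{\M_n} \bY_{P,\{x,y\}} = 0$.

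The direct-summand claim splits into two cases. If $P$ has a singleton at a position of $X-\{x,y\}$, or a pair (other than $\{x,y\}$) with both endpoints in $X-\{x,y\}$, then every $\beta \in \bY_P$ already lies in $J_{X-\{x,y\}}$, so $\bY_{P,\{x,y\}} = 0$ is a trivial direct summand of any $\M_n/J_{X'}$ with $|X'|<|X|$. Otherwise take $X' := X \setminus [x,y]$, so that $|X'| \leq |X| - 2 < |X|$; the case hypothesis then gives (i)~$\bY_P \cap J_{X-\{x,y\}} \subseteq J_{X'}$, since any singleton or right-to-right connection at nodes of $X-\{x,y\}$ in a diagram of $\bY_P$ is forced to involve only nodes outside $[x,y]$, and (ii)~$J_{X'} \cdot \tilde{e}_P \subseteq J_{X-\{x,y\}}$, since such singletons and connections among $X'$-nodes are preserved by the horizontals of $\tilde{e}_P$. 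Right multiplication by $\epsilon^{-k}\tilde{e}_P$ then induces a well-defined surjection $\M_n/J_{X'} \to \bY_{P,\{x,y\}}$, split by the inclusion $\bY_{P,\{x,y\}} \hookrightarrow \M_n/J_{X'}$. The main obstacle is the combinatorial bookkeeping in the product formula $\beta \cdot \tilde{e}_P = \epsilon^k \beta$---tracking planar connections through the middle column and correctly counting middle-only components contributing $\epsilon$ (versus loops contributing $\delta$)---together with the case split above, which is needed because $\tilde{e}_P$ fixes the entire $P$-structure on $[x,y]$, unlike $V_{ab}$ in the Rook--Brauer setting.
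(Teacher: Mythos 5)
Your proof is correct and follows essentially the same route as the paper: the paper's element $\gamma$ (a diagram filling the defects of $Y_P$ with horizontals) is exactly your $\tilde e_P$, the relation $\beta\tilde e_P = \epsilon^{|P|}\beta$ for $\beta\in\bY_P$ is the paper's $\alpha\gamma=\epsilon^{k_o}\alpha$, the choice $X' = X\setminus[x,y]$ matches, and the case split (whether some pair or singleton of $P$ already lands $\bY_P$ inside $J_{X-\{x,y\}}$) is identical. The only cosmetic difference is that the paper uses an $\alpha$-dependent auxiliary diagram $\alpha'$ for the vanishing part and introduces $\gamma$ only for the splitting, while you reuse the single fixed $\tilde e_P$ throughout, which is slightly cleaner; one small omission is that you use but do not explicitly verify $\M_n\,\tilde e_P\subseteq\bY_P$ (needed for the split surjection to land in $\bY_{P,\{x,y\}}$), though this is routine and the paper leaves it implicit as well.
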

\begin{proof}
    To show $\t \otimes_{\M_n} \bY_{P,\{x,y\}} = 0$, we show $\t \otimes_{\M_n} \bY_{P} = 0$.  For a diagram $\alpha \in \bY_P$, define $\alpha'$ to be the diagram obtained from $\alpha$ by preserving all right-to-right connections of $\alpha$ and for the other nodes, make all horizontal left-to-right connections without violating planarity. 
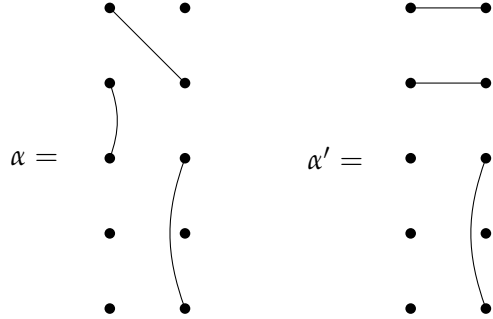
\begin{figure}[h!]

\begin{tikzpicture}
\node at (-1,2) {$\alpha =$};
\fill (0,0) circle[radius=2pt];
\fill (0,1) circle[radius=2pt];
\fill (0,2) circle[radius=2pt];
\fill (0,3) circle[radius=2pt];
\fill (0,4) circle[radius=2pt];
\fill (1,0) circle[radius=2pt];
\fill (1,1) circle[radius=2pt];
\fill (1,2) circle[radius=2pt];
\fill (1,3) circle[radius=2pt];   
\fill (1,4) circle[radius=2pt];   

\node at (3,2) {$\alpha' =$};
\fill (4,0) circle[radius=2pt];
\fill (4,1) circle[radius=2pt];
\fill (4,2) circle[radius=2pt];
\fill (4,3) circle[radius=2pt]; 
\fill (4,4) circle[radius=2pt]; 
\fill (5,0) circle[radius=2pt];
\fill (5,1) circle[radius=2pt];
\fill (5,2) circle[radius=2pt];
\fill (5,3) circle[radius=2pt]; 
\fill (5,4) circle[radius=2pt];

\draw (0,4) -- (1,3);
\draw (5,3) -- (4,3);
\draw (5,4) -- (4,4);
\path[-] (1,0) edge [bend left=20] (1,2);
\path[-] (0,2) edge [bend right=20] (0,3);
%\draw (4,1) -- (3,1);
%\draw (4,2) -- (3,2);
\path[-] (5,0) edge [bend left=20] (5,2);
\end{tikzpicture}

\caption{$\alpha$ and $\alpha'$}
\end{figure}

Note that $\alpha = \epsilon^k \alpha \alpha'$ where $k$ is a nonnegative integer that depends on the number of right-to-right connections and isolated nodes on the right of $\alpha$.  Since any diagram in $\bY_P$ has a right-to-right connection from node $x$ to node $y$, it is a non-permutation diagram.  Hence, \\
\centerline{$1 \otimes \alpha = 1 \otimes \epsilon^k \alpha \alpha' = 1 \alpha \otimes \epsilon^k \alpha' = 0$}
so $\t \otimes_{\M_n} \bY_{P} = 0$ as needed.\\
To prove the second claim, WLOG, assume $x < y$ and let $X' = X-(X \cap [x,y])$.  Note that $|X'| < |X|$.  \\
To see that $\bY_{P,\{x,y\}}$ is a direct summand of $\dfrac{\M_n}{J_{X'}}$, observe that if $\{a,b\} \in P$ and $\{a,b\} \subseteq X-\{x,y\}$ i.e a right-to-right connection between two nodes labeled by $X-\{x,y\}$, then \\
\centerline{$\bY_P \cap J_{X-\{x,y\}} = \bY_P$}
because no sequence of splices/deletions can remove a right-to-right connection.  Similarly, if $\{a\} \in P$ and $\{a\} \subseteq X-\{x,y\}$ i.e an isolated node labeled by $X-\{x,y\}$, then \\
\centerline{$\bY_P \cap J_{X-\{x,y\}} = \bY_P$} 
because of reason similar to above.  Hence, in both cases, $\bY_{P,\{x,y\}} = 0$ and is trivially a direct summand of $M_n/J_{X'}$.\\
For the last case, assume there is no $\{a,b\}, \{d\} \in P$ such that $\{a,b\}, \{d\} \subseteq X-\{x,y\}$ i.e no right-to-right connections or isolated nodes labeled by $X-\{x,y\}$ inside the connection $x$ to $y$.\\
\indent Let $\gamma \in \bY_P$ be a diagram obtained from the right-link state $Y_P$ by making all defects into left-to-right horizontal connections.  For ex, if $P = \{\{1,3\},\{2\}\}$ and $n = 5$, then 
\begin{figure}[h!]
\begin{tikzpicture}
\node at (-1,2) {$Y_P =$};

\fill (1,0) circle[radius=2pt];
\fill (1,1) circle[radius=2pt];
\fill (1,2) circle[radius=2pt];
\fill (1,3) circle[radius=2pt];   
\fill (1,4) circle[radius=2pt];   

\node at (3,2) {$\gamma =$};
\fill (4,0) circle[radius=2pt];
\fill (4,1) circle[radius=2pt];
\fill (4,2) circle[radius=2pt];
\fill (4,3) circle[radius=2pt]; 
\fill (4,4) circle[radius=2pt]; 
\fill (5,0) circle[radius=2pt];
\fill (5,1) circle[radius=2pt];
\fill (5,2) circle[radius=2pt];
\fill (5,3) circle[radius=2pt]; 
\fill (5,4) circle[radius=2pt];

\draw (0,4) -- (1,4);
\draw (0,3) -- (1,3);
\draw (5,3) -- (4,3);
\draw (5,4) -- (4,4);
\path[-] (1,0) edge [bend left=20] (1,2);
%\path[-] (0,2) edge [bend right=20] (0,3);
%\draw (4,1) -- (3,1);
%\draw (4,2) -- (3,2);
\path[-] (5,0) edge [bend left=20] (5,2);
\end{tikzpicture}

\caption{$Y_P$ and $\gamma$}
\end{figure}
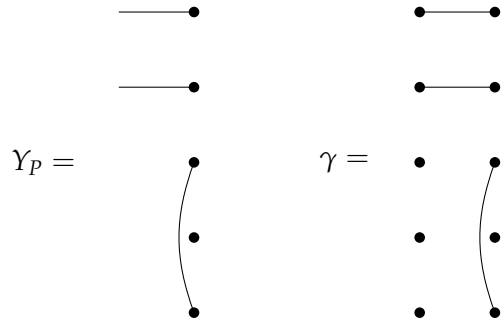

Note that $\gamma \gamma = \epsilon^{k_o}\gamma$ where $k_o $ is a fixed nonnegative integer.  Assume first that $X'$ is nonempty, then any diagram in $J_{X'}$ has a right-to-right connection or isolated node labeled by $X'$ i.e these connections or isolated nodes are outside the connection $\{x,y\}$.  This implies right-multiplying by $\epsilon^{-k_o}\gamma$ takes $J_{X'}$ into $\bY_P \cap J_{X-\{x,y\}}$ and this induces the map \\

\vspace*{-0.1in}
\centerline{$\dfrac{\M_n}{J_{X'}} \xrightarrow[]{\cdot \epsilon^{-k_o} \gamma} \bY_{P,\{x,y\}}$.}

\vspace*{0.2in}
Since there is no $\{a,b\}, \{d\} \in P$ such that $\{a,b\}, \{d\} \subseteq X-\{x,y\}$, any diagram in $\bY_P \cap J_{X-\{x,y\}}$ has to have a right-to-right connection or isolate node labeled by $X'$ and hence, we have an inclusion $\bY_P \cap J_{X-\{x,y\}} \xhookrightarrow{} J_{X'}$.  Along with the inclusion $\bY_P \xhookrightarrow{} \M_n$, these induce the map \\
\centerline{$\bY_{P,\{x,y\}} \xrightarrow[]{} \dfrac{\M_n}{J_{X'}} $.}

\vspace*{0.1in}
The map induced by right-multiplying by $\epsilon^{-k_o} \gamma$ is surjective because for any diagram $\alpha \in \bY_P$, $\alpha \gamma = \epsilon^{k_o}\alpha$.  Since $\gamma \gamma = \epsilon^{k_o}\gamma$, this maps also splits with the splitting map \\

\vspace*{-0.15in}
\centerline{$\bY_{P,\{x,y\}} \xrightarrow[]{} \dfrac{\M_n}{J_{X'}} $}
\vspace*{0.1in}
as above.  Therefore, $\bY_{P,\{x,y\}}$ is a direct summand of $\dfrac{\M_n}{J_{X'}}$.\\
\indent If $X' = \emptyset$, then $\dfrac{\M_n}{J_{X'}} = \M_n$ and we also have $\bY_P \cap J_{X-\{x,y\}} = \emptyset$ because of the assumption that no right-to-right connections or isolated nodes labeled by $X-\{x,y\}$ inside the connection $x$ to $y$ so $\bY_{P,\{x,y\}} = \bY_P$.  The same map as above implies that $\bY_P$ is a direct summand of $\M_n$.
\end{proof}

Similar result holds for $\calA_{X,x}$ and $\calB_{X,x}$.
\begin{lemma} \label{module.Ax.Bx.for.Motzkin}
    Let $\epsilon \in R$ be invertible.  
    \begin{itemize}
        \item
        Let $x\in X\subseteq\{1,\dots,n\}$.
        Then $\t\otimes_{\M_n}\calA_{X,x}=0$ and $\calA_{X,x}$ is a direct summand of $\M_n/J_{X-\{x\}}$.
        \item
        Let $x\in X\subseteq\{1,\dots,n\}$ with $n\geq 2$.
        Then $\t\otimes_{\M_n}\calB_{X,x}=0$.
    \end{itemize}
\end{lemma}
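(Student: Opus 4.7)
The plan is to carry out the same two-part argument as in Lemma~\ref{modules.lemma}, exploiting the observation that both $T_x$ and $V_{xy}$ are planar and hence already lie in $\M_n$. A modest simplification lets us bypass the ``left-left edge versus pair of isolated left nodes'' subcase split used in the Rook-Brauer proof: namely, one can always take $\alpha$ itself (or, for the second bullet, the diagram $\alpha'$ obtained by removing a single edge) as the non-permutation factor that annihilates $\t$ on the right.

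For the first bullet I would show $\t\otimes_{\M_n}A_x=0$, from which the claim about the quotient $\calA_{X,x}$ follows. Given $\alpha\in A_x$, node $x$ on the right is isolated and so $\alpha$ is non-permutation, and a direct diagram computation---identical to the one in $\RBr_n$ since every intermediate piece stays planar---yields $\alpha T_x=\epsilon\,\alpha$ in $\M_n$: the middle node at position $x$ is isolated and contributes one contractible component, while every other middle position propagates through $T_x$'s horizontal. Since $T_x\in A_x$, the class $[T_x]$ lies in $\calA_{X,x}$, and the identity $[\alpha]=\epsilon^{-1}\alpha\cdot [T_x]$ in the left $\M_n$-module $\calA_{X,x}$ gives
\[
1\otimes[\alpha]\;=\;(1\cdot\epsilon^{-1}\alpha)\otimes[T_x]\;=\;0
\]
because $\alpha$ annihilates $\t$ from the right. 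For the direct-summand claim, the computation $T_xT_x=\epsilon T_x$ shows that $\epsilon^{-1}T_x$ is an idempotent of $\M_n$; right multiplication by it sends $J_{X-\{x\}}$ into $A_x\cap J_{X-\{x\}}$, so it descends to a split surjection $\M_n/J_{X-\{x\}}\twoheadrightarrow \calA_{X,x}$ with splitting induced by the inclusion $A_x\hookrightarrow\M_n$, exactly mirroring the Rook-Brauer case.

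For the second bullet I would run the analogous argument using $V_{xy}$ in place of $T_x$. The crucial point is that $V_{xy}\in\M_n$: its left-left arc $(-x,-y)$ and right-right arc $(x,y)$ can be drawn outside the respective columns, so they cross none of the horizontal connections $(-k,k)$. Given $\alpha\in B_{X,x}$ with $x$ connected on the right to some $y\in X$, set $\alpha'$ equal to $\alpha$ with the edge $(x,y)$ deleted; then $\alpha'\in\M_n$ (deletion preserves planarity), $\alpha'$ is non-permutation (both right-$x$ and right-$y$ are now isolated), and a computation parallel to that of Lemma~\ref{modules.lemma} gives $\alpha' V_{xy}=\epsilon\,\alpha$, as the middle positions $x$ and $y$ form a single contractible component supplied by $V_{xy}$'s left-left arc. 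Since $V_{xy}\in B_{X,x}$ we have $[V_{xy}]\in\calB_{X,x}$ and $[\alpha]=\epsilon^{-1}\alpha'\cdot [V_{xy}]$, whence $1\otimes[\alpha]=(1\cdot\epsilon^{-1}\alpha')\otimes[V_{xy}]=0$.

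The one point I expect to require genuine verification, and the main place where the Motzkin argument differs from the Rook-Brauer one, is the planarity of $V_{xy}$ for \emph{every} pair of distinct $x,y$, not merely for adjacent indices; once this is established, all the diagrammatic identities above are formal consequences of the multiplication rule inherited from $\RBr_n$.
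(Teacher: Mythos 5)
Your treatment of the first bullet is correct and matches the paper: the identity $\alpha T_x = \epsilon\alpha$ for $\alpha \in A_x$, combined with the fact that such $\alpha$ is non-permutation, gives $\t\otimes_{\M_n}A_x=0$, and $\epsilon^{-1}T_x$ being an idempotent of $\M_n$ that preserves $J_{X-\{x\}}$ furnishes the split surjection $\M_n/J_{X-\{x\}}\twoheadrightarrow\calA_{X,x}$ exactly as in Lemma~\ref{modules.lemma}. Since $T_x$ is a planar diagram, nothing new is needed here.

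The second bullet, however, has a genuine gap, and it is precisely at the point you flagged. The element $V_{xy}$ is \emph{not} a Motzkin diagram when $x$ and $y$ are non-adjacent. Planarity for these diagram algebras means drawable without crossings \emph{inside the rectangle} bounded by the two columns of nodes; arcs are not permitted to loop around the outside. If $x<z<y$, then the arc $\{-x,-y\}$ together with the left boundary encloses the node $-z$, and the edge $\{-z,z\}$ of $V_{xy}$ must escape that region to reach the right column, forcing a crossing. For instance $V_{13}\in\RBr_3$ is the diagram with blocks $\{-1,-3\},\{1,3\},\{-2,2\}$, which is exactly the basic non-planar configuration. So the factorization $\alpha=\epsilon^{-1}\alpha' V_{xy}$ takes place in $\RBr_n$ but not in $\M_n$, and it does not descend to a relation in the tensor product over $\M_n$; writing $1\otimes[\alpha]=(1\cdot\epsilon^{-1}\alpha')\otimes[V_{xy}]$ is meaningless when $V_{xy}\notin B_{X,x}\subseteq\M_n$. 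This is not an incidental detail: it is the reason the paper introduces the modules $\bY_P$ in the Motzkin setting. There, given $\alpha\in B_{X,x}$ with $x$ joined to $y$ on the right, the nodes strictly between $x$ and $y$ are forced by planarity to pair up among themselves or be isolated, producing a partition $P$ of the interval $[x,y]$ with $\{x,y\}\in P$; then $\alpha\in\bY_P$, and the vanishing $\t\otimes_{\M_n}\bY_P=0$ is proved in Lemma~\ref{prop.Y_P} by right-multiplying by a \emph{planar} idempotent-like diagram $\alpha'$ that preserves all of $\alpha$'s right-to-right arcs (not just $\{x,y\}$) and fills the remaining positions with horizontals. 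That $\alpha'$ is the correct planar substitute for $V_{xy}$: it must carry along the entire nested arc structure between $x$ and $y$ to stay in $\M_n$. To repair your argument you would need to replace the single diagram $V_{xy}$ with something carrying $\alpha$'s full interior arc pattern, which is essentially to rediscover the $\bY_P$ construction.
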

\begin{proof}
    To show that $\t\otimes_{\M_n}\calA_{X,x}=0$, it suffices to show that $\t \otimes_{\M_n} A_x = 0$.  This follows because $\alpha T_x = \epsilon \alpha$ for any diagram $\alpha \in A_x$.  The proof that $\calA_{X,x}$ is a direct summand of $\M_n/J_{X-\{x\}}$ mirrors that of Lemma \ref{modules.lemma} with $\RBr_n$ replaced by $\M_n$.\\
    It suffices to show $\t\otimes_{\M_n} B_{X,x} = 0 $.  Any diagram $\alpha$ in $B_{X,x}$ has $x$ connected to some element $y \in X-\{x\}$ which implies $\alpha \in \bY_P$ for some partition $P$ containing $\{x,y\}$.  The result then follows because $\t\otimes_{\M_n} \bY_P = 0$ by the proof of Lemma \ref{prop.Y_P}.
    
\end{proof}
Analogue of Lemma \ref{tensor.RBr/J} also holds with identical proof where $\RBr_n$ is replaced by $\M_n$. 
\begin{lemma} \label{tensor.M/J}
     Let $J$ be a left ideal of $\M_n$ that is included in $J_n$. Then\\
     \centerline{$\t \otimes_{\M_n} \M_n/J \cong \t.$}
    In particular,\\
    \centerline{$ \Tor^{\M_n}_0(\t,\M_n/J_X) \cong \t $}
    for all $X\subset \{1, \dots, n\}$.
\end{lemma}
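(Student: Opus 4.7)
The plan is to imitate the proof of Lemma \ref{tensor.RBr/J} essentially verbatim, replacing $\RBr_n$ by $\M_n$. The whole content of the statement rests on a single observation: every basis diagram in $J_n$ is by definition a non-permutation diagram, and the trivial module $\t$ is defined so that \emph{every} non-permutation diagram in $\M_n$ acts as zero. Since $J$ is assumed to sit inside $J_n$, the same is true of every basis element of $J$.

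First I would take the short exact sequence of left $\M_n$-modules
\[
0 \longrightarrow J \longrightarrow \M_n \longrightarrow \M_n/J \longrightarrow 0
\]
and apply $\t \otimes_{\M_n} -$ to obtain the right exact sequence
\[
\t \otimes_{\M_n} J \longrightarrow \t \otimes_{\M_n} \M_n \longrightarrow \t \otimes_{\M_n} \M_n/J \longrightarrow 0.
\]
The middle term is canonically $\t$. For the left term, given $x \in \t$ and a basis diagram $d \in J \subseteq J_n$, the computation $x \otimes d = x \otimes (d \cdot 1) = (x\cdot d) \otimes 1 = 0$ shows that $\t \otimes_{\M_n} J = 0$, so the induced map $\t \to \t \otimes_{\M_n} \M_n/J$ is an isomorphism. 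This is precisely the argument used in Lemma \ref{tensor.RBr/J}; the only difference for Motzkin is that the identity diagram is the \emph{unique} permutation diagram, but this plays no role in the argument since all we need is that non-permutation diagrams act as zero.

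For the ``in particular'' clause, I would observe that $J_X \subseteq J_n$ for any $X \subseteq \{1,\dots,n\}$: a basis diagram of $J_X$ has a singleton or right-to-right connection among nodes labeled by elements of $X$, hence certainly among the full node set $\{1,\dots,n\}$, so it is a non-permutation diagram. The first part of the lemma applied to $J = J_X$ then gives $\t \otimes_{\M_n} \M_n/J_X \cong \t$, which is $\Tor^{\M_n}_0(\t,\M_n/J_X)$ by definition.

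I do not anticipate any obstacle: the statement is a routine bookkeeping consequence of the definition of $\t$ together with the hypothesis $J \subseteq J_n$, and the proof transports from the Rook-Brauer setting to the Motzkin setting without modification.
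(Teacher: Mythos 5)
Your proposal is correct and follows essentially the same approach as the paper, which gives the one-line argument that elements of $J\subseteq J_n$ act as zero on $\t$ (and for the Motzkin version simply notes the proof is identical to the Rook--Brauer case with $\RBr_n$ replaced by $\M_n$). Your version via the short exact sequence and right exactness of the tensor product just makes the same observation more explicit.
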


We also have a similar resolution of $\dfrac{\M_n}{J_X}$ in terms of $\calA_{X,x}$ and $\calB_{X,x}$.
\begin{proposition} \label{thm.for.inductive.part2}
     Let $X\subseteq\{1,\dots,n\}$, let $x\in X$, and assume $n\geq 2$.
    The following sequence, in which all maps are induced by either an inclusion or an identity map, is a resolution of $\M_n/J_X$.
    \[\xymatrix@R=5pt{
        \dots
        \ar[r]
        &
        0
        \ar[r]
        &
        \calA_{X,x}
        \oplus
        \calB_{X,x}
        \ar[r]
        &
        \M_n/J_{X-\{x\}} 
        \ar[r]
        &
        \M_n/J_X
        \\
        &
        2
        &
        1
        &
        0
        &
        -1
    }\]
    Moreover, applying $\t\otimes_{\M_n}\!\!-$ to the sequence gives a resolution of $\t\otimes_{\M_n} \M_n/J_X$.
\end{proposition}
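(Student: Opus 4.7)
The plan is to mirror the proof of Proposition \ref{thm.for.inductive} essentially verbatim, with $\RBr_n$ replaced everywhere by $\M_n$ and with invocations of Lemmas \ref{modules.lemma} and \ref{tensor.RBr/J} replaced by their Motzkin counterparts \ref{module.Ax.Bx.for.Motzkin} and \ref{tensor.M/J}. First, I would observe that every map in the sequence is well-defined: the containments $J_{X-\{x\}}\subseteq J_X$, $A_x\cap J_{X-\{x\}}\subseteq J_{X-\{x\}}$ and $B_{X,x}\cap J_{X-\{x\}}\subseteq J_{X-\{x\}}$ hold in $\M_n$ for exactly the same combinatorial reasons they do in $\RBr_n$, since the definitions of these submodules depend only on incidence data (what nodes are connected, what nodes are isolated) and not on crossings.

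Next I would check exactness degree by degree. Exactness at degree $-1$ is just the surjectivity of $\M_n/J_{X-\{x\}}\to\M_n/J_X$. For exactness at degree $0$, the kernel of this surjection is spanned by Motzkin diagrams that lie in $J_X$ but not in $J_{X-\{x\}}$. Such a diagram $\alpha$ must have, among its nodes labeled by $X$, either exactly one isolated node (which is then forced to be $x$, so $\alpha\in A_x$) or exactly one right-to-right connection between two nodes of $X$, one of which must be $x$ (so $\alpha\in B_{X,x}$). These are precisely the images of the map from $\calA_{X,x}\oplus\calB_{X,x}$. For exactness at degree $1$, I would argue that $A_x$ and $B_{X,x}$ have disjoint bases---in $A_x$ the node $x$ is a singleton, while in $B_{X,x}$ it participates in a right-to-right connection---so if $\alpha\in A_x$ and $\beta\in B_{X,x}$ satisfy $\alpha+\beta\in J_{X-\{x\}}$, then $\alpha,\beta\in J_{X-\{x\}}$ individually.

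For the second claim, applying $\t\otimes_{\M_n}-$ kills both $\calA_{X,x}$ and $\calB_{X,x}$ by Lemma \ref{module.Ax.Bx.for.Motzkin}, and Lemma \ref{tensor.M/J} identifies both $\t\otimes_{\M_n}\M_n/J_{X-\{x\}}$ and $\t\otimes_{\M_n}\M_n/J_X$ with $\t$, so that the sequence degenerates to $\t\xrightarrow{\mathrm{Id}}\t$ preceded by zeros, which is manifestly a resolution of $\t\otimes_{\M_n}\M_n/J_X$.

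The only potential obstacle is whether the planarity constraint on Motzkin diagrams interferes with any step; however, none of the steps above require producing new connections or performing operations that could violate planarity. Each step is simply classifying existing diagrams in $\M_n$ according to how nodes in $X$ are connected, so the proof of Proposition \ref{thm.for.inductive} transfers cleanly.
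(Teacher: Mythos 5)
Your proposal is correct and follows exactly the route the paper takes: the paper itself declines to write out a separate argument, stating only that the proof is identical to that of Proposition \ref{thm.for.inductive} with $\RBr_n$ replaced by $\M_n$ and Lemmas \ref{module.Ax.Bx.for.Motzkin}, \ref{tensor.M/J} substituted for Lemmas \ref{modules.lemma}, \ref{tensor.RBr/J}. You have simply spelled out the degree-by-degree exactness check that the paper leaves implicit, including the (correct) observation that nothing in this argument creates new connections and hence planarity is never at risk.
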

The proof of this proposition is identical to that of Lemma \ref{thm.for.inductive} with $\RBr_n$ replaced by $\M_n$, Lemmas \ref{module.Ax.Bx.for.Motzkin} and \ref{tensor.M/J} in place of Lemmas \ref{modules.lemma} and \ref{tensor.RBr/J}, respectively.
\begin{lemma} \label{B_X.x.for.Motzkin}
    Let $x \in X \subseteq [n]$ with $n \geq 2$, then the map \\
    \centerline{$\underset{y~\in~X-\{x\}}{\oplus} \big(\oplus~ \bY_{P,\{x,y\}} \big) \to \calB_{X,x}$}
    
    \vspace*{0.1in}
    \noindent induced by inclusion maps $\bY_P \xhookrightarrow{} B_{X,x}$ is an isomorphism of left $\M_n$-modules where the outermost direct sum runs over all $y \in X-\{x\}$; with $y$ fixed, the inner sum runs over all partitions $P$ of $[x,y]$ (or $[y,x]$ if $x < y$) into subsets of size at most 2 such that $\{x,y\} \in P$.
\end{lemma}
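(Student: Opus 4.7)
The plan is to mirror the proof of the Rook-Brauer analogue (Lemma \ref{lemma.for.BX.x}), with planarity providing the additional input. I would first check that the map $\iota$ is well-defined: the inclusions $\bY_P \hookrightarrow B_{X,x}$ descend to the quotients because $X-\{x,y\} \subseteq X-\{x\}$ gives $J_{X-\{x,y\}} \subseteq J_{X-\{x\}}$, so $\bY_P \cap J_{X-\{x,y\}}$ is sent into $B_{X,x} \cap J_{X-\{x\}}$.

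For surjectivity, take a basis diagram $\alpha \in B_{X,x}$; node $x$ is right-connected to some $y \in X-\{x\}$, and we may assume $x<y$. The crucial planarity observation is that the right-to-right arc joining $x$ and $y$ encloses the nodes in $(x,y)$, so no node $i \in (x,y)$ can carry a defect, since such a line would be forced to cross the arc on its way to the left column. Therefore every right node in $[x,y]$ is either paired with another node of $[x,y]$ or isolated, and this determines a unique partition $P$ of $[x,y]$ with $\{x,y\} \in P$ and $\alpha \in \bY_P$. Hence $\alpha$ lies in the image of the $(y,P)$-summand.

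For injectivity I would proceed in two steps. First, images from distinct summands are distinct basis elements of $\M_n$: varying $y$ changes the right-partner of $x$, and for fixed $y$, different partitions $P$ give genuinely different right-link states on $[x,y]$. Second, within a fixed summand $\bY_{P,\{x,y\}}$, if $\alpha \in \bY_P$ also lies in $J_{X-\{x\}}$, then the witnessing singleton or right-to-right connection among $(X-\{x\})$-labeled nodes cannot involve $y$, since $y$ is paired only with $x \notin X-\{x\}$. The witness therefore sits inside $X-\{x,y\}$, so $\alpha \in J_{X-\{x,y\}}$ and vanishes in $\bY_{P,\{x,y\}}$.

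The main obstacle is making the planarity argument behind surjectivity fully rigorous, in particular verifying that in the Motzkin convention the arc from $x$ to $y$ genuinely forbids a defect at any node in $(x,y)$. Once this is pinned down, the rest is a routine adaptation of the Rook-Brauer proof, with $\bY_{P,\{x,y\}}$ playing the role of $\calM_{X,\{x,x_o\}}$ and the partition $P$ absorbing the extra bookkeeping forced by planarity.
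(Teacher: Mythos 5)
Your proposal is correct and follows essentially the same route as the paper: well-definedness from $J_{X-\{x,y\}} \subseteq J_{X-\{x\}}$, surjectivity by reading off a partition $P$ of $[x,y]$ from the given diagram, and injectivity by separating summands and then showing each $\bY_{P,\{x,y\}} \to \calB_{X,x}$ is injective because any witness in $J_{X-\{x\}}$ cannot involve $y$ and so lies in $J_{X-\{x,y\}}$. The planarity point you flag as the ``main obstacle'' is exactly the observation the paper uses implicitly in its surjectivity step (the arc from $x$ to $y$ forces every right node strictly between them to be isolated or paired within $[x,y]$), and your explicit formulation of it is a welcome clarification rather than a gap.
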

\begin{proof}
    Since $y \in X-\{x\}$, $J_{X-\{x,y\}}$ is sent into $J_{X-\{x\}}$.  Since $\{x,y\} \in P$, diagrams in $\bY_P$ have node $y$ connected to node $x$ so these diagrams are also in $B_{X,x}$ and $\bY_P \cap J_{X-\{x,y\}}$ is sent into $B_{X,x} \cap J_{X-\{x\}}$.  The map above is well-defined following from these facts. \\
    \indent To see that the map is surjective, any diagram in $B_{X,x}$ has a right-to-right connection from node $x$ to another node $y \in X-\{x\}$.  Inside this connection, there might be more right-to-right connections which, taken all together, can be identified with a partition $P$ of $[x,y]$ (or $[y,x]$) into subsets of size at most 2 such that $\{x,y\} \in P$.  Hence, the diagram is in the image of the direct summand $\bY_{P,\{x,y\}}$ with the specified $y$ and $P$ above so the map is surjective.  \\
    \indent To see that the map in injective, note that two diagrams from different direct summands $\bY_{P,\{x,y\}}$'s have to be distinct in $\calB_{X,x}$.  This is because for different $y$'s, node $x$ of the two diagrams is connected to two different nodes so the diagram can't be the same.  When $y$'s are the same, different partitions $P$'s of $[x,y]$ (or $[y,x]$) yields different right-to-right connections inside the right-to-right connection of node $x$ and $y$ and hence, two diagram also can't be the same as well.  \\
    \indent Therefore, to show that the map is injective, it suffices to show that for each direct summand $\bY_{P,\{x,y\}}$, the map $\bY_{P,\{x,y\}} \to \calB_{X,x}$ is injective.  This is clear because in order for a diagram $\alpha$ in $\bY_{P,\{x,y\}}$ to be zero in $\calB_{X,x}$, there must be at least a right-to-right connection or an isolated node among nodes of $\alpha$ labeled by $X-\{x\}$.  Since $\alpha$ has node $x$ connected to node $y$, this implies that the right-to-right connection or isolated node must occur among nodes of $\alpha$ labeled by $X-\{x,y\}$.  But this implies $\alpha = 0$ in $\bY_{P,\{x,y\}}$.
\end{proof}

We are now fully equipped to establish the analogue of Theorem \ref{vanishing.RBr} in the context of Motzkin algebras.  
\begin{theorem} \label{vanishing.Motzkin}
    For invertible $\epsilon \in R$ and any $\delta \in R$, let $X \subseteq [n]$, then the groups $\Tor_\ast^{\M_n}(\t,\M_n/J_X)$ vanish in positive degrees.
\end{theorem}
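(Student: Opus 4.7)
The plan is to mirror the proof of Theorem \ref{vanishing.RBr} essentially verbatim, now that all Motzkin-side analogues have been set up: Proposition \ref{thm.for.inductive.part2} provides the short resolution, Lemma \ref{module.Ax.Bx.for.Motzkin} handles $\calA_{X,x}$ and the vanishing of $\t\otimes_{\M_n}\calB_{X,x}$, Lemma \ref{B_X.x.for.Motzkin} decomposes $\calB_{X,x}$ into $\bY_{P,\{x,y\}}$'s, and Lemma \ref{prop.Y_P} realizes each $\bY_{P,\{x,y\}}$ as a direct summand of $\M_n/J_{X'}$ for some $X'$ with $|X'|<|X|$. Taken together, these are exactly the ingredients needed to feed into the inductive resolution machine (Theorem \ref{inductive-resolution-method}).

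First I would dispose of the trivial base cases $n=0,1$ in exactly the way it is done in the Rook-Brauer proof: when $n=0$ we have $\M_n=R$; when $n=1$, either $X=\emptyset$ and $J_X=0$, or $X=\{1\}$ in which case $J_X=A_1$ and one gets a short exact sequence $0\to \calA_{X,1}\to\M_1\to\M_1/J_X\to 0$ that splits because $\epsilon^{-1}T_1$ is idempotent, giving direct summands of $\M_1$. Then assume $n\ge 2$ and induct on $|X|$. The case $X=\emptyset$ is immediate (so $\M_n/J_X=\M_n$). Assume $|X|>0$ and that the vanishing holds for every $X'\subseteq[n]$ with $|X'|<|X|$.

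Pick any $x\in X$ and apply Proposition \ref{thm.for.inductive.part2} together with Theorem \ref{inductive-resolution-method}. It then suffices to show that each of $\M_n/J_{X-\{x\}}$, $\calA_{X,x}$, and $\calB_{X,x}$ has vanishing $\Tor^{\M_n}_i(\t,-)$ in positive degrees. The first is the induction hypothesis since $|X-\{x\}|<|X|$. The second follows because by Lemma \ref{module.Ax.Bx.for.Motzkin}, $\calA_{X,x}$ is a direct summand of $\M_n/J_{X-\{x\}}$, and a direct summand of a module with vanishing $\Tor$ again has vanishing $\Tor$. For the third, Lemma \ref{B_X.x.for.Motzkin} gives an isomorphism of $\calB_{X,x}$ with a direct sum of modules of the form $\bY_{P,\{x,y\}}$, and Lemma \ref{prop.Y_P} shows each such $\bY_{P,\{x,y\}}$ is a direct summand of $\M_n/J_{X'}$ for some $X'\subseteq[n]$ with $|X'|<|X|$; the induction hypothesis then applies, and $\Tor$ commutes with direct sums, so $\Tor^{\M_n}_i(\t,\calB_{X,x})=0$ for $i>0$ as well.

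There is no real obstacle: the genuine work has already been carried out in the preceding lemmas, in particular the somewhat delicate construction in Lemma \ref{prop.Y_P} of the idempotent $\epsilon^{-k_o}\gamma$ used to split $\bY_{P,\{x,y\}}$ off of $\M_n/J_{X'}$, and the combinatorial verification in Lemma \ref{B_X.x.for.Motzkin} that the $\bY_{P,\{x,y\}}$'s assemble cleanly into $\calB_{X,x}$. Once those are in hand, the only subtle point to check is the bookkeeping that the $X'$ produced by Lemma \ref{prop.Y_P} indeed has strictly smaller cardinality than $X$, which is guaranteed by $X'=X\setminus(X\cap[x,y])$ and the fact that $\{x,y\}\subseteq X\cap[x,y]$ (so at least two elements are removed). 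With that observation the induction closes and the theorem follows.
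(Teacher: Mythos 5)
Your proposal is correct and follows essentially the same route as the paper: the same base cases ($n=0,1$), the same induction on $|X|$, and the same invocation of Proposition \ref{thm.for.inductive.part2}, Lemma \ref{module.Ax.Bx.for.Motzkin}, Lemma \ref{B_X.x.for.Motzkin}, and Lemma \ref{prop.Y_P} to feed into Theorem \ref{inductive-resolution-method}. The paper simply states the proof as a list of substitutions to be made in the proof of Theorem \ref{vanishing.RBr}, whereas you spell it out; your explicit check that $X'=X\setminus(X\cap[x,y])$ has $|X'|<|X|$ because $\{x,y\}\subseteq X\cap[x,y]$ is a useful clarification that the paper leaves implicit inside Lemma \ref{prop.Y_P}.
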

\begin{proof}
   The proof of this is almost identical to that of Theorem \ref{vanishing.RBr} with the following changes: 
   \begin{itemize}
       \item Replace all $\RBr_n$ by $\M_n$.
       \item Replace Proposition \ref{thm.for.inductive} by Proposition \ref{thm.for.inductive.part2}.
       \item $\calA_{X,x}$ is a direct summand of $\M_n/J_{X-\{x\}}$ by Lemma \ref{module.Ax.Bx.for.Motzkin}.
       \item By Lemma \ref{B_X.x.for.Motzkin}, $\calB_{X,\{x\}}$ is a direct sum of $\bY_{P,\{x,y\}}$'s and by Lemma \ref{prop.Y_P}, $\bY_{P,\{x,y\}}$ is a direct summand of $\M_n/J_{X'}$ where $X' \subseteq [n]$ with $|X'| < |X|$.
   \end{itemize}
\end{proof}

\section{Proof of Main Results}
In this section, we present the proofs of our main results, Theorem \ref{main.thm} and Theorem \ref{homology.Motzkin}.  We give the proof of Theorem \ref{homology.Motzkin} first as it is an immediate consequence of Theorem \ref{vanishing.Motzkin}.  \\

\noindent {\bf Proof of Theorem \ref{homology.Motzkin}} \hspace{0.1in}For invertible $\epsilon \in R$ and any $\delta \in R$, we can apply Theorem \ref{vanishing.Motzkin} with $X = [n]$ to yield $\Tor_\ast^{\M_n}(\t,\M_n/J_n) = 0$ for $\ast > 0$.  Since $\M_n/J_n$ is spanned by permutation diagram namely the identity diagram, we see that as a left $\M_n$-module, $\M_n/J_n$ is isomorphic to the trivial module, $\t$.  This implies $\Tor_\ast^{\M_n}(\t,\t) = 0$ for $\ast > 0$ and it's also clear that $\Tor_0^{\M_n}(\t,\t) = \t$. \qed

To prove Theorem \ref{main.thm}, we need the analogue of Shapiro's Lemma for the Rook-Brauer algebras.  
\begin{theorem} \label{strong.shapiro}
    Let $n\geqslant m\geqslant 0$. 
Suppose $\epsilon$ is invertible in $R$ and $\delta \in R$, then the maps
\[
    \iota_\ast\colon
    \Tor_\ast^{R\fS_n}(\t,R\fS_n\otimes_{R\fS_m}\t)
    \longrightarrow
    \Tor_\ast^{\RBr_n}(\t,\RBr_n\otimes_{\RBr_m}\t)
\]
and
\[
    \pi_\ast\colon
    \Tor_\ast^{\RBr_n}(\t,\RBr_n\otimes_{\RBr_m}\t)
    \longrightarrow
    \Tor_\ast^{R\fS_n}(\t,R\fS_n\otimes_{R\fS_m}\t)
\]
are mutually inverse isomorphisms.
\end{theorem}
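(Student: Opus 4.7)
Since $\pi\circ\iota=\mathrm{id}_{R\fS_n}$, the composite $\pi_\ast\circ\iota_\ast$ is already the identity; it therefore suffices to prove that either map is an isomorphism.  My plan is to identify both Tor groups with $H_\ast(\fS_m;R)$ in a way compatible with $\iota_\ast$.  The left-hand side is handled by the classical Shapiro lemma, which applies because $R\fS_n$ is free as a right $R\fS_m$-module:
\[
\Tor^{R\fS_n}_\ast(\t,\,R\fS_n\otimes_{R\fS_m}\t)\;\cong\;\Tor^{R\fS_m}_\ast(\t,\t)\;=\;H_\ast(\fS_m;R).
\]

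For the right-hand side I proceed in two reductions.  \emph{Step A (change of rings).}  Applying Theorem~\ref{vanishing.RBr} with $X=\{1,\dots,n\}$ gives $R\fS_n\cong\RBr_n/J_n$ and shows that $\Tor^{\RBr_n}_i(\t,R\fS_n)=0$ for $i>0$, while Lemma~\ref{tensor.RBr/J} gives $\t\otimes_{\RBr_n}R\fS_n\cong\t$.  Consequently, if $F_\ast\to\t$ is any free right $\RBr_n$-resolution, then $F_\ast\otimes_{\RBr_n}R\fS_n$ is a complex of free right $R\fS_n$-modules resolving $\t$.  For any left $R\fS_n$-module $N$, viewed as an $\RBr_n$-module via $\pi$, the identity $F_\ast\otimes_{\RBr_n}N=(F_\ast\otimes_{\RBr_n}R\fS_n)\otimes_{R\fS_n}N$ then yields a natural isomorphism $\Tor^{\RBr_n}_\ast(\t,N)\cong\Tor^{R\fS_n}_\ast(\t,N)$, and this isomorphism is the one induced by the ring map $\pi$.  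Applied to $N=R\fS_n\otimes_{R\fS_m}\t$, it reduces the desired statement to showing that the surjection
\[
\phi\colon\RBr_n\otimes_{\RBr_m}\t\twoheadrightarrow R\fS_n\otimes_{R\fS_m}\t,\qquad\alpha\otimes 1\mapsto\pi(\alpha)\otimes 1,
\]
induces an isomorphism on $\Tor^{\RBr_n}(\t,-)$.

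\emph{Step B (vanishing of Tor on $\ker\phi$).}  It suffices to show $\Tor^{\RBr_n}_\ast(\t,\ker\phi)=0$ in all degrees.  Using the basis of $\RBr_n\otimes_{\RBr_m}\t$ from Proposition~3.1 of \cite{mt}, $\ker\phi$ is spanned by those basis diagrams that fail to be of permutation type: either the box connects to at least one right node below the box, or the complement of the box's connections contains a singleton or a same-side (left-to-left or right-to-right) connection.  I will filter $\ker\phi$ by these features and, using idempotent splittings based on $\epsilon^{-1}T_i$ and $\epsilon^{-1}T_aV_{ab}$ in the spirit of Lemma~\ref{modules.lemma}, identify each successive filtration quotient as a direct summand of a module $\RBr_n/J_X$ with $|X|<n$.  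Theorem~\ref{vanishing.RBr} then yields the vanishing of $\Tor^{\RBr_n}_\ast(\t,-)$ on each quotient, and an induction along the filtration gives the vanishing for $\ker\phi$.

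Combining Steps A and B yields $\Tor^{\RBr_n}_\ast(\t,\RBr_n\otimes_{\RBr_m}\t)\cong H_\ast(\fS_m;R)$, matching the left-hand side.  Naturality of every construction with respect to the pair $(\iota,\pi)$ ensures that the composite isomorphism is precisely $\iota_\ast$ (and its inverse $\pi_\ast$).  The principal technical obstacle is Step B: exhibiting the filtration of $\ker\phi$ whose subquotients are summands of $\RBr_n/J_X$.  This is the true inductive-resolution heart of the argument, and it is where the invertibility of $\epsilon$ plays its decisive role, mirroring the analogous step for partition algebras in \cite{bhp}.
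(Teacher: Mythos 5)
Your proposal takes a genuinely different route from the paper's. The paper never introduces the kernel of a comparison map at all: it uses Lemma~\ref{lemma.quotient.by.Jm.tensor} to rewrite $\RBr_n\otimes_{\RBr_m}\t$ as $\RBr_n/J_m\otimes_{R\fS_m}\t$, uses Lemma~\ref{lemma.quotient.by.Jm} (freeness of $\RBr_n/J_m$ over $R\fS_m$) together with the vanishing $\Tor^{\RBr_n}_{>0}(\t,\RBr_n/J_m)=0$ from Theorem~\ref{vanishing.RBr} applied to $X=\{n-m+1,\dots,n\}$, and then runs the double-complex/spectral-sequence argument of Theorem~4.1 of \cite{bhp1} to identify \emph{both} sides independently with $H_\ast(\fS_m;\t)$ in a way compatible with $\iota$ and $\pi$. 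Your Step~A --- a change of rings along $\pi\colon\RBr_n\to R\fS_n$, justified by $\Tor^{\RBr_n}_{>0}(\t,R\fS_n)=0$ (Theorem~\ref{vanishing.RBr} with $X=[n]$) and $\t\otimes_{\RBr_n}R\fS_n\cong\t$ (Lemma~\ref{tensor.RBr/J}) --- is correct and is an attractive alternative reduction; it moves both $\Tor$ groups over the same ring $\RBr_n$.

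The genuine gap is Step~B, and it is not a small one: after Step~A, showing $\phi_\ast$ is an isomorphism is essentially the whole remaining content of the theorem, and you have only sketched a filtration plan without carrying it out. Two concrete problems with the plan as stated. First, you need $\Tor^{\RBr_n}_\ast(\t,\ker\phi)=0$ in \emph{all} degrees, including degree~$0$; but a direct summand of $\RBr_n/J_X$ has $\Tor_0(\t,-)$ equal to a summand of $\t\ne0$ in general, so ``direct summand of $\RBr_n/J_X$'' is not by itself enough --- you would have to track that the relevant summands are of the type $\calA_{X,x}$ or $\calM_{X,\{a,b\}}$, which have $\t\otimes_{\RBr_n}(-)=0$ by Lemma~\ref{modules.lemma}. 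Second, you need to actually exhibit the filtration of $\ker\phi$ inside $\RBr_n\otimes_{\RBr_m}\t$ and verify that the idempotents $\epsilon^{-1}T_i$ and $\epsilon^{-1}T_aV_{ab}$ act in the required way on the filtration quotients; this is precisely the inductive-resolution work that the paper performs once and for all on the modules $\RBr_n/J_X$ (Theorem~\ref{vanishing.RBr}) and then converts into the Shapiro-type statement via Lemmas~\ref{lemma.quotient.by.Jm} and \ref{lemma.quotient.by.Jm.tensor}, without ever touching $\ker\phi$. Until Step~B is written out, the proposal is a plausible outline rather than a proof.
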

We can see that Theorem \ref{main.thm} follows immediately from Theorem \ref{strong.shapiro} by choosing $m = n$ and noting that $R\fS_n\otimes_{R\fS_m}\t\cong\t$ and $\RBr_n\otimes_{\RBr_m}\t\cong \t$. \\
For the rest of this section, we will establish preliminary results to prove Theorem \ref{strong.shapiro}.  \\
\indent Recall from Def \ref{def.of.J_X} that $J_m \subseteq \RBr_n$ denotes the left ideal spanned by all diagrams in which among the nodes on the right labeled by $\{n-m+1,\dots, n\}$, there is at least one singleton or a right-to-right connection.  Furthermore, $J_m$ is also a right $\fS_m$-module via the inclusion $\fS_m \subseteq \RBr_m \subseteq \RBr_n$ and this implies $\RBr_n/J_m$ is a right $\fS_m$-module.  
\begin{lemma} \label{lemma.quotient.by.Jm}
    For~$m\leq n$, $\RBr_n/J_m$ is free when regarded as a right $R\fS_m$-module.
\end{lemma}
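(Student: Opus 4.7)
The plan is to exhibit an explicit $R\fS_m$-module basis of $\RBr_n/J_m$ by decomposing it into free $\fS_m$-orbits of admissible diagrams. Recall first that $\RBr_n/J_m$ has, as a free $R$-module, the natural basis $\mathcal{B}$ consisting of cosets of those diagrams $\alpha\in\RBr_n$ not in $J_m$, i.e., those for which no right node labeled by an element of $\{n-m+1,\ldots,n\}$ is a singleton and no two such right nodes are joined by a right-to-right edge. Call these \emph{admissible} diagrams, and for $\alpha\in\mathcal{B}$ and $i\in\{n-m+1,\ldots,n\}$ let $v_i(\alpha)$ denote the unique other endpoint of the block of $\alpha$ containing right node $i$; the $v_i(\alpha)$ are pairwise distinct external nodes (each either a left node or a right node labeled in $\{1,\ldots,n-m\}$).

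Next I would verify that right multiplication by $\sigma\in\fS_m$ (identified with its image in $\RBr_n$, i.e., $\sigma$ on the right positions $\{n-m+1,\ldots,n\}$ and horizontal connections elsewhere) permutes $\mathcal{B}$. Since $\sigma$, being a permutation diagram, has no same-side connections, and since every middle node in the product $\alpha\cdot\sigma$ is connected to the right column through $\sigma$, no loops or contractible components can arise during the multiplication; hence $\alpha\cdot\sigma$ equals a single diagram in $\RBr_n$ with no scalar factor of $\delta$ or $\epsilon$. Tracing through the contraction shows that right node $\sigma(i)$ of the product is paired with $v_i(\alpha)$, so $v_j(\alpha\cdot\sigma)=v_{\sigma^{-1}(j)}(\alpha)$; in particular, the product is admissible since each $v_i(\alpha)$ is external (never a distinguished right node).

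The resulting action of $\fS_m$ on $\mathcal{B}$ is free: $\alpha\cdot\sigma=\alpha$ gives $v_{\sigma^{-1}(j)}(\alpha)=v_j(\alpha)$ for all $j$, and distinctness of the $v_j(\alpha)$ forces $\sigma=\mathrm{id}$. Fixing any total order on the nodes of $\RBr_n$ and letting $\mathcal{B}_0\subset\mathcal{B}$ consist of those $\alpha$ with $v_{n-m+1}(\alpha)<v_{n-m+2}(\alpha)<\cdots<v_n(\alpha)$ then selects exactly one representative per $\fS_m$-orbit, so every admissible diagram is uniquely $\alpha_0\cdot\sigma$ with $\alpha_0\in\mathcal{B}_0$ and $\sigma\in\fS_m$. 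This yields the right $R\fS_m$-module decomposition
\[
\RBr_n/J_m \;=\; \bigoplus_{\alpha_0\in\mathcal{B}_0}\alpha_0\cdot R\fS_m \;\cong\; \bigoplus_{\alpha_0\in\mathcal{B}_0}R\fS_m.
\]
The one subtle step is the middle-column bookkeeping in the second paragraph, to confirm that no factors of $\delta$ or $\epsilon$ appear and that the product remains admissible; once this is checked, freeness reduces to a standard orbit decomposition.
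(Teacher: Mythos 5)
Your proof is correct and takes essentially the same route as the paper's: identify the nonzero admissible diagrams as an $R$-basis of $\RBr_n/J_m$, observe that $\fS_m$ acts freely on them by right multiplication, and conclude freeness. The paper states these facts very tersely; your write-up fills in the details the paper leaves implicit (the middle-column bookkeeping showing no $\delta$ or $\epsilon$ factors arise, the explicit verification that the action is free, and the choice of orbit representatives giving the direct-sum decomposition).
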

\begin{proof}
    Any nonzero diagram in $\RBr_n/J_m$ has no singleton or right-to-right connection among nodes in $\{n-m+1,\dots, n\}$ i.e each node in $\{n-m+1,\dots, n\}$ is connected to a distinct node.  These diagrams form a basis of $\RBr_n/J_m$ as a right $R$-module and since $\fS_m$ acts freely on these diagram, $\RBr_n/J_m$ is a free $R\fS_m$-module.
\end{proof}

\begin{lemma} \label{lemma.quotient.by.Jm.tensor}
For~$m\leq n$, there is an isomorphism of left $\RBr_n$-modules
    \[
    \RBr_n/J_m\otimes_{R\fS_m}\t \cong \RBr_n\otimes_{\RBr_m}\t 
    \]
    where $(b+J_m)\otimes r\in\RBr_n/J_m\otimes_{R\fS_m}\t$ is mapped to $b\otimes r\in \RBr_n\otimes_{\RBr_m}\t$.
    
\end{lemma}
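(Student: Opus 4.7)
The plan is to construct an explicit inverse $\psi\colon \RBr_n\otimes_{\RBr_m}\t \to \RBr_n/J_m\otimes_{R\fS_m}\t$ sending $b\otimes r$ to $(b+J_m)\otimes r$, and to verify that both $\phi$ (the stated map) and $\psi$ are well-defined $\RBr_n$-linear maps. $\RBr_n$-linearity is immediate from the definitions on both sides, and once well-definedness is established the two maps are obviously mutually inverse on generators, so the entire content is in checking well-definedness.

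For $\phi$, the nontrivial requirement is $b\otimes r = 0$ in $\RBr_n\otimes_{\RBr_m}\t$ whenever $b$ is a basis diagram of $J_m$, and I would split into two cases following Definition \ref{def.of.J_X}. If $b$ has a singleton at some node $i\in\{n-m+1,\dots,n\}$ on the right, the product $b\cdot T_i$ (with $T_i\in\RBr_m$) produces exactly one isolated middle component and no loops, so $bT_i = \epsilon\,b$; then $\epsilon(b\otimes r) = bT_i\otimes r = b\otimes T_i r = 0$ because $T_i$ is a non-permutation and acts by zero on $\t$, and invertibility of $\epsilon$ yields $b\otimes r = 0$. If instead $b$ has a right-to-right connection $\{i,j\}\subseteq\{n-m+1,\dots,n\}$, a balancing count on the two columns of $b$ (each containing $n$ nodes) forces the left of $b$ to have either a left-to-left connection or a pair of isolated left nodes; then the construction of Cases 1 and 2 from Lemma \ref{modules.lemma} produces a factorization $b = \epsilon^{-c}b'V_{ij}$ with $c\in\{0,1\}$ and $V_{ij}\in\RBr_m$, whence $b\otimes r = \epsilon^{-c}b'\otimes V_{ij}r = 0$ since $V_{ij}$ is also a non-permutation.

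For $\psi$, the relation to preserve is $b\beta\otimes r = b\otimes\beta r$ for $\beta\in\RBr_m$. When $\beta=\sigma\in\fS_m$ this is immediate from $\sigma\cdot r = r$ together with the observation (needed for $\RBr_n/J_m$ to even be a right $\fS_m$-module) that $\fS_m$ permutes the nodes $\{n-m+1,\dots,n\}$ and so preserves $J_m$. When $\beta$ is a non-permutation diagram in $\RBr_m$, a symmetric node-count shows $\beta$, viewed in $\RBr_n$ via the embedding, already has a singleton or right-to-right connection among the nodes $\{n-m+1,\dots,n\}$, so $\beta\in J_m$ and hence $b\beta\in J_m$ because $J_m$ is a left ideal; both sides of the relation then vanish. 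The main obstacle is the right-to-right case for $\phi$: without invertibility of $\delta$ one cannot simply cancel via $bV_{ij}=\delta\,b$, so the $b = \epsilon^{-c}b'V_{ij}$ rewrite from the $\calB_{X,x}$ argument of Lemma \ref{modules.lemma} is essential, and this is exactly where the asymmetry between the hypotheses on $\epsilon$ and $\delta$ shows up.
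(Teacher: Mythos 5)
Your proof is correct and is essentially the argument the paper itself invokes by citing Lemma~5.3 of \cite{bhp}: construct the two explicit maps, observe that $R\fS_m$-balancedness is automatic in one direction, and reduce everything to showing that basis diagrams of $J_m$ die in $\RBr_n\otimes_{\RBr_m}\t$ and that non-permutation diagrams of $\RBr_m$ land in $J_m$. All of your case analyses check out, including the parity count forcing a left-to-left connection or two left singletons, and the $b=\epsilon^{-c}b'V_{ij}$ rewrite borrowed from Lemma~\ref{modules.lemma}.

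One small remark: the lemma as stated carries no hypothesis on $\epsilon$, whereas your singleton case (cancelling $\epsilon$ from $bT_i=\epsilon b$) and Case~1 of your right-to-right case use $\epsilon^{-1}$. The invertibility is actually avoidable in the singleton case: since the number of singletons of a Rook--Brauer diagram is even, a diagram $b$ with a right singleton at $i\in\{n-m+1,\dots,n\}$ has a second singleton at some node $p$, and if $b'$ denotes $b$ with $i$ and $p$ joined, then $b=b'T_i$ \emph{exactly} (the middle component created by $T_i$ reaches an outer node, so no factor of $\epsilon$ appears). This is harmless here because the lemma is only applied inside Theorem~\ref{strong.shapiro}, where $\epsilon$ is assumed invertible, but if you want to prove the statement exactly as written you should use the exact factorization.
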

The proof of this Lemma is almost identical to that of Lemma 5.3 of \cite{bhp} with $P_n$ replaced by $\RBr_n$ and $P_m$ replaced by $\RBr_m$.\\ \\
{\bf Proof of Theorem \ref{strong.shapiro}:}
Under the assumption of Theorem \ref{strong.shapiro}, we have 
\[
\Tor^{\RBr_n}_\ast (\t, \RBr_n/J_m)=\begin{cases} \t &\mbox{if } \ast= 0 \\
0 & \mbox{if } \ast>0 \end{cases}
\]
by Theorem \ref{vanishing.RBr}.  The proof now follows exactly as that of Theorem 4.1 of \cite{bhp1} with $\RBr_n$ replacing $\Br_n$ and using Lemma \ref{lemma.quotient.by.Jm.tensor}, Theorem \ref{vanishing.RBr} in place of Lemma 4.4 and Theorem 3.2, respectively. \qed

\newpage 
\begin{bibdiv}
\begin{biblist}
\bib{bhp}{article}{
	author={Boyd, Rachel Jane},
	author={Hepworth,Richard},
	author={Patzt, Peter},
	title={The homology of the partition algebras},
	journal={Pacific Journal of Mathematics},
	volume={327},
	date={2023},
	number={1},
	pages={1-27},
	%issn={1465-3060},
	eprint={https://msp.org/pjm/2023/327-1/p01.xhtml} }

\bib{bh}{article}{
	author={Boyd, Rachel Jane},
	author={Hepworth,Richard},
	title={The homology of the Temperley-Lieb algebras},
	journal={Geometry and Topology},
	volume={28},
	date={2024},
	number={3},
	pages={1437-1499},
	%issn={1465-3060},
	eprint={https://msp.org/gt/2024/28-3/gt-v28-n3-p11-s.pdf} }

\bib{bhp1}{article}{
	author={Boyd, Rachel Jane},
	author={Hepworth,Richard},
    author = {Patzt, Peter},
	title={The homology of the Brauer algebras},
	journal={Sel. Math. New Ser.},
	volume={27},
	date={2021},
	number={85},
	%pages={1437-1499},
	%issn={1465-3060},
	eprint={https://link.springer.com/article/10.1007/s00029-021-00697-4} }

\bib{b}{article}{
	author={Boyde, Guy},
	title={Idempotents and homology of diagram algebras},
	journal={Math. Ann.},
	volume={391},
	date={2025},
	%number={85},
	pages={2173–2207 },
	%issn={1465-3060},
	eprint={https://link.springer.com/article/10.1007/s00208-024-02960-3} }

\bib{Patzt}{article}{
   author={Patzt, Peter} 
   title={Representation stability for diagram algebras},
   journal = {Journal of Algebra, vol 638},
   pages = {625-669},
   year={2024},
   ISSN = {0021-8693},
   eprint={{https://doi.org/10.1016/j.jalgebra.2023.09.017}},}

\bib{fg}{article}{
   author={Fisher, Andrew},
   author = {Graves, Daniel}
   title={Cohomology of Tanabe algebras},
   %journal = {Journal of Algebra, vol 638}
   %pages = {625-669}
   year={2024},
   %ISSN = {0021-8693}
   eprint={{https://arxiv.org/abs/2410.00599}} 
}

\bib{fg1}{article}{
   author={Fisher, Andrew},
   author = {Graves, Daniel},
   title={Cohomology of diagram algebras},
   %journal = {Journal of Algebra, vol 638}
   %pages = {625-669}
   year={2024},
   %ISSN = {0021-8693}
   eprint={{https://arxiv.org/abs/2412.14887}} 
}

\bib{nak}{article}{
   author={Nakaoka, Minoru},
   title={Decomposition Theorem for Homology Groups of Symmetric Groups},
   journal = {Annals of Mathematics, vol 71},
   pages = {16-42},
   year={1960},
   %ISSN = {0021-8693}
   eprint={{https://www.jstor.org/stable/1969878}} 
}

\bib{mt}{article}{
   author={Muljat, Anthony},
   author = {Ta, Khoa},
   title={Noetherianity of Diagram Algebras},
   %journal = {Journal of Algebra, vol 638}
   %pages = {625-669}
   year={2024},
   %ISSN = {0021-8693}
   eprint={{https://doi.org/10.48550/arXiv.2409.10885}}  }
  
\end{biblist}
\end{bibdiv}
\end{document}